\theoremstyle{definition} % default
\newtheorem{theorem}{Theorem} % [section]
\newtheorem{lemma}[theorem]{Lemma}
\newtheorem{proposition}[theorem]{Proposition}
\theoremstyle{definition}
\theoremstyle{remark}
\newcommand{\PP}{\mathbb{P}}
\newcommand{\EW}{\mathbb{E}}
\newcommand{\dif}{\mathrm{d}}
\newcommand{\RE}{\mathrm{Re}}
\newcommand{\eul}{\mathrm{e}}
\title{Some notes on moment inequalities for heavy-tailed distributions}
\author{Paul Buterus}
\address{Paul Buterus, Germany}
\email{buterus@math.uni-bielefeld.de}
\author{Holger Sambale}
\address{Holger Sambale, Faculty of Mathematics, Ruhr University Bochum, Germany}
\email{holger.sambale@rub.de}
\subjclass{Primary 60E15, 60F10, Secondary 46E30, 46N30}
\keywords{concentration of measure, heavy tails, Pareto distribution, moment inequality, polynomial chaos, Hanson-Wright inequality}
\thanks{H.\,S.\ was supported by the Deutsche Forschungsgemeinschaft (DFG) via CRC 1283 \emph{Taming uncertainty and profiting from randomness and low regularity in analysis, stochastics and their applications}.}
\date{\today}
\begin{document}

\begin{abstract}
We investigate the relation between moments and tails of heavy-tailed (in particular, Pareto-type) distributions. We also discuss the sharpness of our results in a number of examples under certain regularity conditions like log-convexity. Moreover, we derive concentration bounds for polynomial chaos of any order $d$.
\end{abstract}

\maketitle

\section{Introduction}

In classical situations, the family of $L^p$ norms of a random variable $X$ contains a wealth of information about the tails of $X$. For sub-Gaussian random variables, for instance, it is standard knowledge that the growth of the $L^p$ norms characterizes the tail behavior, i.\,e., the property $\PP(|X| \ge t) \le 2\eul^{-t^2/c^2}$ for some $c > 0$ and any $t \ge 0$ is equivalent to
\[
\lVert X \rVert_{L^p} := (\EW [|X|^p])^{1/p} \le Cp^{1/2}
\]
for any $p \ge 1$, where $C>0$ is some constant only depending on $c$, cf.\ e.g.\ \cite[Proposition 2.5.2]{vershynin:2018}. Analogous results remain valid if the random variable under consideration has slightly heavier tails which are no longer sub-Gaussian but still decay exponentially, i.\,e., $\PP(|X| \ge t) \le 2\eul^{-t^\alpha/c^\alpha}$ for some $\alpha \in (0,2)$ and any $t \ge 0$, cf.\ \cite[Proposition 5.2]{sambale:2023}.

Against this context, one may ask for similar properties of heavy-tailed random variables, i.\,e., random variables with tails of the form
\[
\PP(|X| \ge t) \le Ct^{-\alpha}
\]
for some $\alpha > 0$, any $t \ge c$ and suitable $C,c > 0$. A classical example are Pareto-type random variables. For any $\alpha,b > 0$, a Pareto $(\alpha,b)$ variable $X$, written $X \sim \operatorname{Par}(\alpha,b)$, is a random variable with tails $\PP(X \ge t) = (b/t)^\alpha$ for any $t \ge b$ and density function $f(t) = \alpha b^\alpha/t^{\alpha+1}$ for $t \ge b$. Moreover, by $\operatorname{Par}_\mathrm{s}(\alpha,b)$ we refer to symmetrized Pareto $(\alpha,b)$ variables, which have density $f_\mathrm{s}(t) = \alpha b^\alpha/(2|t|^{\alpha+1})$ for any $|t| \ge b$.

If $X \sim \operatorname{Par}(\alpha,b)$, then $\lVert X \rVert_{L^p} < \infty$ iff $p < \alpha$. More precisely,
\begin{equation}\label{Paretomoments}
\EW [X^p] = b^p\frac{\alpha}{\alpha-p}
\end{equation}
for any $p < \alpha$. In particular, applying Chebyshev's inequality for any $p<\alpha$ and optimizing in $p$ leads to the bound
\begin{equation}
\label{eq:log-error-tails}
\PP(X \ge t) \le \eul \alpha\log(t/b)(b/t)^\alpha
\end{equation}
for any $t \ge b\eul^{1/\alpha}$, which is attained at $p=\alpha-1/\log(t/b)$. Consequently, this approach results in a logarithmic error factor. Moreover, we cannot recover the moments \eqref{Paretomoments} from \eqref{eq:log-error-tails}: indeed, it is easy to check for a random variable $Y$ with tails as in \eqref{eq:log-error-tails}, it holds that there are constants $0 < c_{b,\alpha} < C_{b,\alpha} < \infty$ depending on $b$ and $\alpha$ only such that
\begin{equation}\label{einsweiter}
c_{b,\alpha} (\alpha-p)^{-2} \le \EW [Y^p] \le C_{b,\alpha}(\alpha-p)^{-2}
\end{equation}
as $p \to \alpha$, which corresponds to the $p$-th moment of the product of two independent Pareto variables (up to constant). In some sense, this phenomenon of having access to the moments up to some order $\alpha$ and involving a sort of ``error'' in terms of the distribution is also related to classical topics like the truncated Hamburger moment problem, cf.\ e.\,g.\ \cite[Corollary 2.5.4]{akhiezer:1965}.

In this note, we study the connections between \eqref{Paretomoments} and \eqref{eq:log-error-tails} in more detail. We provide equivalent characterizations of the moment behavior of Pareto-type distributions. Moreover, we construct examples of random variables which demonstrate that the logarithmic error which appears in \eqref{eq:log-error-tails} cannot be avoided in general (in fact, it turns out to be almost optimal). We also demonstrate that the error can be avoided if additional assumptions on the tail behaviour are imposed, which for instance involve log-convexity.

In Section 3, these observations are complemented by moment and concentration inequalities for polynomial chaos in heavy-tailed random variables, i.\,e., functionals of the form $\sum_{i_1,\ldots,i_d} a_{i_1,\ldots,i_d} X_{i_1} \cdots X_{i_d}$ for real-valued coefficients $a_{i_1,\ldots,i_d}$. For $d=2$, we compare our bounds to the famous Hanson--Wright inequality for sub-Gaussian random variables, noting that they may be regarded as preliminary results in this direction.

\section{Moments and tails of heavy-tailed distributions}

\subsection{Moment inequalities}

Let $X$ be a random variable such that the growth of its $p$-th (fractional absolute) moments is at most of Pareto type, i.\,e., $\EW[|X|^p] \le C(\alpha-p)^{-1}$ for any real $p < \alpha$ and a suitable constant $C > 0$. As mentioned in the introduction, in the sub-Gaussian case the corresponding property $\EW[|X|^p] \le Cp^{p/2}$ has a number of well-known reformulations involving the tails, the moment-generating function or the sub-Gaussian norm of $X$. Clearly, and in view of the discussion in the introduction, none of these properties can have a direct ``heavy-tailed'' analogue. Instead, we may reformulate Pareto-type growth of the moments in terms of truncations of $X$ as carried out in the following proposition.

\begin{proposition}
 \label{prop_charac}
 Let $\alpha >0$ and $X$ be a non-negative random variable. Then, the following properties are equivalent:
 \begin{enumerate}[label=(\alph*)]
    \item The moments of $X$ satisfy
          \begin{equation*}
               \EW[X^p] \leq \frac{C_1}{\alpha - p}
          \end{equation*}
          for all $p \in (0,\alpha)$.
    \item The truncated $\alpha$-th moment satisfies
          \begin{equation*}
                \EW[ X^\alpha \mathbbm{1}_{\{ X \leq r\}} ] \leq C_2 \log{r}
          \end{equation*}
          for any $r\ge\mathrm{e}$.
    \item  We have
          \begin{equation*}
              \int_0^r y^{\alpha-1} \PP(X\geq y) \, \dif y \leq C_3 \log{r}
          \end{equation*}
          for any $r \ge \mathrm{e}$.
    \item  We have
          \begin{equation*}
              \EW[X^\alpha \exp(-sX)] \leq C_4 \lvert \log{s} \rvert 
          \end{equation*}
          for any $s \in (0,\mathrm{e}^{-1}]$.
 \end{enumerate}
 Here, the constants $C_i > 0$, $i=1,\ldots,4$, only differ by $\alpha$-dependent factors.
\end{proposition}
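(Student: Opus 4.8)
The plan is to establish the three equivalences (a) $\Leftrightarrow$ (b), (b) $\Leftrightarrow$ (c) and (b) $\Leftrightarrow$ (d); one checks along the way that each implication only alters the constant $C_i$ by a factor depending on $\alpha$. I would connect (b) and (c) through the layer-cake identity
\[
\alpha\int_0^r y^{\alpha-1}\,\PP(X\ge y)\,\dif y \;=\; \EW[X^\alpha\mathbbm{1}_{\{X\le r\}}] + r^\alpha\,\PP(X>r),
\]
which follows from $X^\alpha\mathbbm{1}_{\{X\le r\}}=\int_0^r\alpha y^{\alpha-1}\mathbbm{1}_{\{y<X\le r\}}\,\dif y$ by adding the contribution of $\{X>r\}$. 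Since both summands on the right are non-negative, (c) $\Rightarrow$ (b) is immediate. For (b) $\Rightarrow$ (c) it remains to control $r^\alpha\PP(X>r)$: decomposing $\{X>r\}=\bigcup_{k\ge0}\{2^kr<X\le 2^{k+1}r\}$ and using $(2^kr)^\alpha\PP(2^kr<X\le 2^{k+1}r)\le\EW[X^\alpha\mathbbm{1}_{\{X\le 2^{k+1}r\}}]\le C_2\log(2^{k+1}r)$, the convergent series $\sum_{k\ge0}(k+1)2^{-k\alpha}$ gives $r^\alpha\PP(X>r)\le C_\alpha C_2\log r$ for $r\ge\eul$, and inserting this into the identity yields (c). (In particular, (b) implies the ``lossy'' tail bound $\PP(X\ge y)\le C_\alpha C_2\log y/y^\alpha$ for $y\ge\eul$.)

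For (a) $\Rightarrow$ (b) I would use that on $\{X\le r\}$ one has $X^\alpha\le r^{\alpha-p}X^p$ for every $p\in(0,\alpha)$, hence $\EW[X^\alpha\mathbbm{1}_{\{X\le r\}}]\le r^{\alpha-p}\EW[X^p]\le r^{\alpha-p}C_1/(\alpha-p)$; choosing $\alpha-p=1/\log r$ (which lies in $(0,\alpha)$ as soon as $\log r>1/\alpha$, the remaining range of $r$ being absorbed by monotonicity of $r\mapsto\EW[X^\alpha\mathbbm{1}_{\{X\le r\}}]$) yields the bound $\eul C_1\log r$. The reverse direction (b) $\Rightarrow$ (a) is the technical core. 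The tempting route — feed the tail bound $\PP(X\ge y)\le C_\alpha C_2\log y/y^\alpha$ into $\EW[X^p]=\int_0^\infty py^{p-1}\PP(X\ge y)\,\dif y$ — only produces $\EW[X^p]\le C_\alpha C_2(\alpha-p)^{-2}$, exactly the loss discussed around \eqref{einsweiter}. To recover the correct exponent one must decompose the \emph{moment} dyadically instead of the tail: I would write
\[
\EW[X^p]=\EW[X^p\mathbbm{1}_{\{X\le\eul\}}]+\sum_{k\ge0}\EW\big[X^p\mathbbm{1}_{\{\eul^{2^k}<X\le\eul^{2^{k+1}}\}}\big],
\]
bound the first term by $\eul^p\le\eul^\alpha$, and on the $k$-th block use $X^p\le X^\alpha\eul^{-2^k(\alpha-p)}$ together with (b) at $r=\eul^{2^{k+1}}$ to bound it by $C_2\,2^{k+1}\eul^{-2^k(\alpha-p)}$; since a comparison with $\int_1^\infty\eul^{-ut}\,\dif t$ shows $\sum_{k\ge0}2^k\eul^{-2^ku}\le C/u$ for a numerical constant $C$, summing gives $\EW[X^p]\le C_1'/(\alpha-p)$.

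The equivalence (b) $\Leftrightarrow$ (d) is in essence the substitution $s\leftrightarrow 1/r$. For (d) $\Rightarrow$ (b) I would use $\eul^{-sX}\ge\eul^{-sr}$ on $\{X\le r\}$, so that $\EW[X^\alpha\mathbbm{1}_{\{X\le r\}}]\le\eul^{sr}\EW[X^\alpha\eul^{-sX}]\le\eul^{sr}C_4|\log s|$, and then take $s=1/r\le\eul^{-1}$. For (b) $\Rightarrow$ (d) I would split $\EW[X^\alpha\eul^{-sX}]$ at $X=1/s$: the part on $\{X\le 1/s\}$ is at most $\EW[X^\alpha\mathbbm{1}_{\{X\le1/s\}}]\le C_2|\log s|$ by (b), while on $\{2^k/s<X\le 2^{k+1}/s\}$ we have $\eul^{-sX}<\eul^{-2^k}$ and, as in the proof of (b) $\Rightarrow$ (c), $\PP(2^k/s<X\le 2^{k+1}/s)\le C_2\log(2^{k+1}/s)/(2^k/s)^\alpha$, so the tail is at most $2^\alpha C_2\sum_{k\ge0}\eul^{-2^k}\big((k+1)\log 2+|\log s|\big)\le C\,C_2|\log s|$, using $|\log s|\ge 1$ and the fast convergence of $\sum_{k\ge0}(k+2)\eul^{-2^k}$.

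The main obstacle is the step (b) $\Rightarrow$ (a): one must not route the argument through a pointwise tail estimate, since that irrecoverably costs a logarithmic factor; decomposing the moment dyadically — rather than the tail — is precisely what keeps the exponent $(\alpha-p)^{-1}$ sharp, which is the whole point of the proposition in view of the introduction. Everything else — the layer-cake identities, the Hölder/Markov interpolation, convergence of the elementary geometric-type sums, and the bookkeeping of the thresholds $r\ge\eul$, $s\le\eul^{-1}$ and of the $\alpha$-dependent constants — is routine.
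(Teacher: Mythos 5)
Your proposal is correct, and it reaches the proposition by a genuinely different, more elementary route than the paper. For (a) $\Rightarrow$ (b) the paper passes through Lemma \ref{lemma:charac_moments}: it reduces to $\alpha=2$ via $\tilde X := X^{\alpha/2}$, lower-bounds $\int_0^{\pi/\tilde r}(1-\RE\varphi_{\tilde X}(u))u^{-2-s}\,\dif u$ using $1-\cos x=2\sin^2(x/2)$, and then optimizes in $s$; your pointwise interpolation $X^\alpha\mathbbm{1}_{\{X\le r\}}\le r^{\alpha-p}X^p$ with the choice $\alpha-p=1/\log r$ gives the same bound $\eul C_1\log r$ directly and bypasses the Fourier-analytic identities from Matsui--Pawlas entirely (your handling of the boundary range of $r$ by monotonicity parallels the paper's treatment of $\alpha<2$). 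For (b) $\Rightarrow$ (a) the paper uses the exact identity $\EW[\hat X^p]=(\alpha-p)\int_1^\infty u^{-(\alpha-p)-1}\EW[\hat X^\alpha\mathbbm{1}_{\{\hat X\le u\}}]\,\dif u$ with $\hat X := X\mathbbm{1}_{\{X\ge\eul\}}$, which produces $C_2/(\alpha-p)$ in one line; your dyadic decomposition over the blocks $\{\eul^{2^k}<X\le\eul^{2^{k+1}}\}$ is a discretized version of the same computation, and your insistence on not routing through the pointwise tail bound (which would cost $(\alpha-p)^{-2}$) is exactly the point the paper respects as well. For (b) $\Rightarrow$ (c) the paper bounds the boundary term $r^\alpha\PP(X\ge r)/\alpha$ by invoking (a) together with Chebyshev, whereas you bound it directly from (b) by a dyadic tail splitting, which makes this implication independent of (a); your (b) $\Rightarrow$ (d) likewise discretizes the paper's representation $\EW[X^\alpha\eul^{-sX}]=s\int_0^\infty\eul^{-st}\EW[X^\alpha\mathbbm{1}_{\{X\le t\}}]\,\dif t$ followed by integration by parts, and the step (d) $\Rightarrow$ (b) is identical in both. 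What the paper's route buys is mainly the standalone Lemma \ref{lemma:charac_moments} (characterizations via the Laplace transform and the characteristic function, noted to extend to singularities $(\alpha-s)^{-h}$) and somewhat cleaner explicit constants coming from exact integral identities; what your route buys is a shorter, self-contained proof of the proposition itself, at the price of slightly cruder ($\alpha$-dependent) constants from the dyadic sums.
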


Explicit relations between the constants $C_i$ can be found in the proofs. Note that in parts (b) and (c), one may replace the condition $r \ge \mathrm{e}$ by $r \ge r^\ast$ for any fixed $r^\ast > 1$ at the cost of modifying the constants. Similarly, (d) can be extended to $s \in (0,s^\ast]$ for any fixed $s^\ast < 1$.

To prove Proposition \ref{prop_charac}, we start with a technical lemma, specialized for $\alpha=2$, which yields characterizations of Pareto-type growth of the moments of a random variable $X$ in terms of its Laplace transform and its characteristic function, respectively.

\begin{lemma}
 \label{lemma:charac_moments}
 Let $X$ be a non-negative random variable and $C>0$ a fixed constant which may depend on $X$. Then the following statements are equivalent. 
 \begin{enumerate}[label=(\alph*)]
    \item For all $s \in (0,1)$ the moments of $X$ satisfy
          \begin{equation*}
               \EW[X^{1+s}] \leq \frac{C}{1 - s}.
          \end{equation*}
    \item $X$ is integrable and the Laplace transform $L(t) := L_X(t) := \mathbb{E}[\eul^{-tX}]$ of $X$  satisfies
          \begin{equation*}
              \int_0^\infty \frac{\EW[X] + L'(u)}{u^{1+s}} \, \dif u \leq C \frac{\Gamma(1-s)}{s (1-s)}
          \end{equation*}
          for all $s\in (0,1)$.
    \item The characteristic function $\varphi(t) := \varphi_X(t) := \mathbb{E}[\eul^{itX}]$ of $X$ satisfies
          \begin{equation*}
              \int_0^\infty \frac{1 - \RE(\varphi(u))}{u^{2+s}} \, \dif u  \leq C\frac{\sin(\pi s/2) \Gamma(1-s)}{s(s+1)(1-s)}
          \end{equation*}
          for all $s \in (0,1)$.
    \end{enumerate}
\end{lemma}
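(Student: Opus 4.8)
The plan is to reduce each of (a)--(c) to the single scalar inequality $\EW[X^{1+s}] \le C/(1-s)$, $s \in (0,1)$, by means of two classical integral representations of fractional moments. Once this is done the "equivalence" is really a pair of \emph{exact identities}, valid in $[0,\infty]$, with $C$ literally the same constant in all three statements. Throughout, Tonelli's theorem is available unconditionally, since the integrands that appear ($X(1-\eul^{-uX})$ and $1-\cos(uX)$) are non-negative; so every identity below holds in the extended sense and no a priori finiteness has to be checked.

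\emph{Step 1: the Laplace side.} I would first record $\int_0^\infty u^{-1-s}(1-\eul^{-u})\,\dif u = \Gamma(1-s)/s$ for $s\in(0,1)$ (one integration by parts; convergence at $0$ because $1-\eul^{-u}=O(u)$, at $\infty$ because $s>0$), and rescale $u \mapsto uX$ to get $X^{1+s} = \frac{s}{\Gamma(1-s)}\,X\int_0^\infty u^{-1-s}(1-\eul^{-uX})\,\dif u$ pointwise on $\{X\ge0\}$. Since $X \le 1 + X^{1+s}$, property (a) forces $X\in L^1$, which is exactly the extra hypothesis in (b); and once $X\in L^1$ we have $L'(u)=-\EW[X\eul^{-uX}]$, hence $\EW[X]+L'(u)=\EW[X(1-\eul^{-uX})]\ge0$. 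Taking expectations and applying Tonelli gives
\[
\int_0^\infty \frac{\EW[X]+L'(u)}{u^{1+s}}\,\dif u = \EW\Big[X\int_0^\infty \frac{1-\eul^{-uX}}{u^{1+s}}\,\dif u\Big] = \frac{\Gamma(1-s)}{s}\,\EW[X^{1+s}]
\]
in $[0,\infty]$, for every $s\in(0,1)$. Dividing the bound in (b) by $\Gamma(1-s)/s$ turns it verbatim into the bound in (a); hence (a) $\Leftrightarrow$ (b).

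\emph{Step 2: the characteristic function side.} Here I would use $\int_0^\infty u^{-2-s}(1-\cos u)\,\dif u = \frac{\Gamma(1-s)\sin(\pi s/2)}{s(1+s)}$ for $s\in(0,1)$: integration by parts (the boundary terms vanish since $1-\cos u=O(u^2)$ at $0$ and is bounded at $\infty$) reduces it to $\frac{1}{1+s}\int_0^\infty u^{-1-s}\sin u\,\dif u$, and the standard identity $\int_0^\infty u^{\mu-1}\sin u\,\dif u=\Gamma(\mu)\sin(\pi\mu/2)$ with $\mu=-s\in(-1,0)$, together with $\Gamma(-s)=-\Gamma(1-s)/s$, yields the value. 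Rescaling $u\mapsto uX$ gives $X^{1+s}=\frac{s(1+s)}{\Gamma(1-s)\sin(\pi s/2)}\int_0^\infty u^{-2-s}(1-\cos(uX))\,\dif u$ pointwise, and since $1-\RE(\varphi(u))=\EW[1-\cos(uX)]\ge0$, Tonelli gives
\[
\int_0^\infty \frac{1-\RE(\varphi(u))}{u^{2+s}}\,\dif u = \frac{\Gamma(1-s)\sin(\pi s/2)}{s(1+s)}\,\EW[X^{1+s}]
\]
in $[0,\infty]$, for every $s\in(0,1)$; in particular a finite right-hand side in (c) forces $X^{1+s}\in L^1$ and hence $X\in L^1$. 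Dividing the bound in (c) by the (positive, finite) prefactor reproduces the bound in (a), so (a) $\Leftrightarrow$ (c), finishing the proof.

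\emph{Main obstacle.} The only genuinely non-mechanical points are pinning down the two constants — especially the value of $\int_0^\infty u^{-2-s}(1-\cos u)\,\dif u$ — and verifying convergence of the improper integrals at both endpoints. The integrability assumption in (b) plays only a cosmetic role: it makes $\EW[X]+L'(u)$ and $L'(u)=-\EW[X\eul^{-uX}]$ meaningful, and it is automatic under (a) as well as under (c).
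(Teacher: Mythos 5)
Your proof is correct and follows essentially the same route as the paper: both reduce (b) and (c) to (a) via the exact integral identities expressing $\EW[X^{1+s}]$ through $\EW[X]+L'(u)$ and through $1-\RE(\varphi(u))$, so that the three bounds hold with literally the same constant. The only difference is that you derive the two identities from scratch (integration by parts, the Fresnel-type sine integral, scaling and Tonelli), whereas the paper simply cites them from Matsui--Pawlas and Kawata.
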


We shall need the equivalence of (a) and (c) in the sequel. In view of \eqref{einsweiter} and the discussion of polynomial chaos in Section 3, note that similar arguments as used in Lemma \ref{lemma:charac_moments} can also be applied to singularities of the form $(\alpha-s)^{-h}$, where $h>0$. Lemma \ref{lemma:charac_moments} is essentially an application of identities for fractional absolute moments of heavy-tailed distributions from \cite{matsui-pawlas:2016}. For the sake of completeness, we provide some details in the proof.

\begin{proof}
  First note that $L$ is continuously differentiable with $L'(t) = - \EW[X \exp(-tX)]$ and $L'(0) = -\EW[X]$ as long as $X$ is integrable. Therefore, as shown in \cite[Lemma 1.1]{matsui-pawlas:2016}, the identity
  \begin{equation*}
 \frac{\Gamma(1-s)}{s} x^s = \int_0^\infty \frac{1-\exp(-xu)}{u^{s+1}} \, \dif u,
\end{equation*}
valid for $s \in (0,1)$ and $x>0$, implies that
   \begin{equation*}
         \EW[X^{1+s}] = \frac{s}{\Gamma(1-s)} \int_0^\infty \frac{L'(u)-L'(0)}{u^{s+1}} \, \dif u
   \end{equation*}
   for any $s \in (0,1)$, so that the equivalence of (a) and (b) directly follows.

   Similarly, by formula (7) in \cite[Theorem 11.4.3]{kawata:1972},
   \begin{equation*}
  \int_0^\infty \frac{1-\cos(xu)}{u^{1+\beta}} \, \dif u = \frac{\Gamma(2-\beta)}{\beta} \frac{\sin(\pi (1-\beta)/2)}{1-\beta} |x|^\beta
\end{equation*}
for $\beta \in (0,2)$ and $x \in \mathbb{R}$. Choosing $\beta=s+1$ and applying Fubini's theorem, it was shown in \cite[Lemma 1.3 (2)]{matsui-pawlas:2016} that
   \begin{equation*}
      \EW[X^{1+s}] = \frac{s (s+1)}{\sin(\pi s/2) \Gamma(1-s)} \int_0^\infty \frac{1-\RE(\varphi(u))}{u^{2+s}} \, \dif u
   \end{equation*}
   for any $s \in (0,1)$, which establishes the equivalence of (a) and (c).
\end{proof}

\begin{proof}[Proof of Proposition \ref{prop_charac}]
 To prove that (a) implies (b), we temporarily assume that $\alpha = 2$ by replacing $X$ by the random variable $\tilde{X} := X^{\alpha/2}$ and $C_1$ by $\tilde{C}_1 := 2C_1/\alpha$. Using part (c) of Lemma \ref{lemma:charac_moments}, we then obtain
   \begin{equation*}
    \tilde{C}_1 \frac{\sin(\pi s/2) \Gamma(1-s)}{s(s+1)(1-s)} \geq \int_0^\infty \frac{1 - \RE(\varphi_{\tilde{X}}(u))}{u^{2+s}} \, \dif u \geq \int_0^{\pi/\tilde{r}} \frac{1 - \RE(\varphi_{\tilde{X}}(u))}{u^{2+s}} \, \dif u
   \end{equation*}
    for any $\tilde{r} > 0$. The last integral can be estimated further by
   \begin{align*}
        \int_0^{\pi/\tilde{r}} \frac{1 - \RE(\varphi_{\tilde{X}}(u))}{u^{2+s}} \, \dif u
        &\geq 2 \EW \Big[ \mathbbm{1}_{\{ \tilde{X} \leq \tilde{r}\}} \int_0^{\pi/\tilde{r}} \frac{\sin(\tilde{X}u/2)^2}{u^{2+s}} \, \dif u\Big] \\
        &\geq \frac{2}{\pi^2} \EW[ \mathbbm{1}_{\{ \tilde{X} \leq \tilde{r}\}} \tilde{X}^2] \int_0^{\pi/\tilde{r}} u^{-s} \, \dif u \\
        &\geq \frac{2}{\pi^{1+s}} \frac{\tilde{r}^{s-1}}{1-s}  \EW[ \mathbbm{1}_{\{ \tilde{X} \leq \tilde{r}\}} \tilde{X}^2],
   \end{align*}
   since $1-\cos(x)=2\sin(x/2)^2$ for any $x$ and $\sin(x) \geq 2x/\pi$ on $x \in [0,\pi/2]$ (use e.\,g.\ that $\sin(x)$ is concave on $[0,\pi/2]$). Combining both inequalities yields
   \begin{equation*}
         \frac{\pi^{1+s}}{2} \tilde{C}_1 \frac{\sin(\pi s/2) \Gamma(1-s)}{s(s+1)} \tilde{r}^{1-s} \geq  \EW[ \mathbbm{1}_{\{ \tilde{X} \leq \tilde{r}\}} \tilde{X}^2].
   \end{equation*}
   Next, we optimize this expression in $s$ by taking $s = 1- \log(\tilde{r})^{-1} \in [0,1)$ if $\tilde{r} \ge \mathrm{e}$. Using that $\sin(x) \leq x$ for any $x \ge 0$ as well as $\Gamma(x) = x^{-1} \Gamma(1+x) \leq x^{-1}$ on $x \in (0,1]$, we find that
   \begin{equation*}
       \EW[ \mathbbm{1}_{\{ \tilde{X} \leq \tilde{r}\}} \tilde{X}^2] \leq \frac{\mathrm{e} \, \pi^3}{4} \, \tilde{C}_1 \log(\tilde{r}).
   \end{equation*}
   Now setting $\tilde{r} := r^{\alpha/2}$, it follows that
   \begin{equation*}
       \EW[ \mathbbm{1}_{\{ X \leq r\}} X^\alpha] = \EW[ \mathbbm{1}_{\{ \tilde{X} \leq \tilde{r}\}} \tilde{X}^2] \leq \frac{\mathrm{e} \, \pi^3}{4} \, \tilde{C}_1 \log(\tilde{r}) = \frac{\mathrm{e} \, \pi^3}{4} \, C_1 \log(r)
   \end{equation*}
   provided that $r \ge \eul^{2/\alpha}$. If $\alpha \ge 2$, this yields (b), while if $\alpha < 2$, it remains to note that
   \[
   \EW[ \mathbbm{1}_{\{ X \leq r\}} X^\alpha] \le \EW[ \mathbbm{1}_{\{ X \leq \eul^{2/\alpha}\}} X^\alpha] \le \frac{\mathrm{e} \, \pi^3}{2\alpha} \, C_1 \le \frac{\mathrm{e} \, \pi^3}{2\alpha} \, C_1 \log(r)
   \]
   for any $r \in [\eul,\eul^{2/\alpha}]$.
   
   To show that that (b) implies (a), we replace $X$ by $\hat{X} := X\mathbbm{1}_{\{X \ge \mathrm{e}\}}$. We may then write
   \begin{equation*}
        \EW[\hat{X}^p] = (\alpha-p) \int_1^\infty u^{-(\alpha-p)-1} \EW[\hat{X}^\alpha \mathbbm{1}_{\{\hat{X} \leq u\}}] \, \dif u,
   \end{equation*}
   noting that $\mathbbm{1}_{\{\hat{X} \leq u\}} = 0$ if $u < \mathrm{e}$, and thus conclude that
   \begin{equation*}
       \EW[\hat{X}^p] \leq C_2  (\alpha-p) \int_1^\infty u^{ -(\alpha-p)-1} \log{u} \, \dif u = \frac{C_2}{\alpha-p}
   \end{equation*}
   and hence
   \begin{equation*}
       \EW[X^p] \leq \eul^\alpha + \frac{C_2}{\alpha-p} \le \frac{C_2 + \alpha\eul^\alpha}{\alpha-p}.
   \end{equation*}

   Moreover, noting that
      \begin{equation*}
        \EW[X^\alpha \mathbbm{1}_{\{X \leq r\}}]  = \alpha \int_0^r y^{\alpha-1} \PP(y \leq X \leq r) \, \dif y \leq \alpha \int_0^r y^{\alpha-1} \PP(X \geq y) \, \dif y,
   \end{equation*}
   (c) trivially implies (b) with $C_2 := \alpha C_3$. On the other hand, using that
   \begin{equation*}
       \int_0^r y^{\alpha-1} \PP(X \ge y) \, \dif y = \int_0^r y^{\alpha-1} \PP(y \le X \le r) \, \dif y + \frac{r^\alpha}{\alpha} \PP(X \ge r),
   \end{equation*}
   the first term is bounded $C_2\alpha^{-1}\log(r)$ due to (b), and by (a) and Chebyshev's inequality similarly as in \eqref{eq:log-error-tails}, the second term is bounded by $C_1 \mathrm{e} \alpha^{-1} \log(r)$. In particular, (c) follows with $C_3 := (C_1 \mathrm{e} + C_2)/\alpha$.
   
   To prove the equivalence of (b) and (d), we observe that
   \begin{align*}
         \EW[X^\alpha \exp(-sX)] &= s \EW[X^\alpha \int_X^\infty \exp(-s t) \, \dif t]    = s \int_0^\infty \eul^{-st} \EW[ X^\alpha \mathbbm{1}_{\{X \leq t\}}] \, \dif t \\
         &\leq \eul^\alpha+ C_2 s \int_1^\infty \eul^{-st} \log(t) \, \dif t  = \eul^\alpha + C_2 \int_s^\infty \frac{\exp(-u)}{u} \dif u
   \end{align*}
   by splitting the integral into the parts $t \le \mathrm{e}$ and $t \ge \mathrm{e} \ge 1$ and integration by parts in the last step. Splitting the integral on the right-hand side at $u =1$ then yields
   \begin{equation*}
        \EW[X^\alpha \exp(-sX)] \le \eul^\alpha + C_2(\eul^{-1} + |\log{s}|) \le (\eul^\alpha + C_2(\eul^{-1} + 1))|\log{s}|
   \end{equation*}
   for any $s \le \mathrm{e}^{-1}$. On the other hand, we have
   \begin{equation*}
        \EW[X^\alpha \exp(-sX)] \geq \eul^{-1} \EW[X^\alpha \mathbbm{1}_{\{X \leq 1/s\}}],
   \end{equation*}
   which implies (b) with $C_2 := \mathrm{e} C_4$.
\end{proof}

In view of the discussion in the introduction, and as we will confirm in the course of this note, the properties listed in Proposition \ref{prop_charac} cannot be complemented by a statement about the tails in general. However, it turns out that they are equivalent to a Pareto-type tail behaviour if we impose additional ``structural'' assumptions on the tails.

To this end, recall that a function $f \colon [b,\infty) \to \mathbb{R}$ for some $b \ge 0$ is called log-convex if $\log (f)$ is convex on $[b,\infty)$. In the following, we are interested in random variables with log-convex tails. If $X$ is non-negative, this means that its tails $\PP(X \ge t)$ are a log-convex function of $t \in [b,\infty)$ with $b := \mathrm{ess }\inf X$. More generally, a symmetric real-valued random variable $X$ has log-convex tails if $|X|$ has log-convex tails. In the same way, one may also define random variables with log-concave tails.

Clearly, Pareto random variables have log-convex tails. Note that sometimes, in the definition of (say, non-negative) random variables $X$ with log-convex tails one requires the tails of $X$ to be log-convex on the full non-negative half-axis $[0,\infty)$, i.\,e.\ not only restricted to $t \ge b = \mathrm{ess }\inf X$. To see that the two definitions essentially lead to the same concept, one may replace $X$ by $X-b$ if necessary. For instance, if $X \sim \operatorname{Par}(\alpha,b)$, it holds that $2^{-\alpha}\mathbb{P}(X \ge t) \le \mathbb{P}(X-b\ge t) \le \PP(X \ge t)$ for all $t\ge0$. In particular, similar relations also hold for the $L^p$ norms of $X$ and $X-b$, noting that $2^{-\alpha} \EW[X^p] \le \EW[(X-b)^p] \le \EW[X^p]$ for any $p > 0$.

\begin{proposition}\label{prop_3}
 If $t^\alpha \PP(X \geq t)$ is $\log$-convex on $t \in [b,\infty)$ with $b := \mathrm{ess }\inf X$, then the characterizations of Proposition \ref{prop_charac} are also equivalent to the tail bound
 \begin{equation*}
     \PP(X \geq t) \leq \frac{C_5}{t^\alpha}
 \end{equation*}
 for $t \ge\mathrm{e}$, where the constant $C_5 > 0$ only differs from the constants $C_1$ to $C_4$ by $\alpha$-dependent factors. The same statement holds if $t^\alpha \PP(X \geq t)$ is $\log$-concave and bounded from below by some $\delta > 0$.
\end{proposition}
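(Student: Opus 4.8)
The plan is to add the asserted tail bound to the cycle of equivalences of Proposition~\ref{prop_charac}; it then suffices to prove that the tail bound implies, say, property~(a), and that property~(c) --- which is available by Proposition~\ref{prop_charac} --- together with the respective structural hypothesis implies the tail bound. The first implication requires no structural assumption: starting from $\EW[X^p]=\int_0^\infty p t^{p-1}\PP(X\ge t)\,\dif t$, I would split the integral at $t=\eul$ and insert $\PP(X\ge t)\le C_5 t^{-\alpha}$ on $[\eul,\infty)$ to obtain $\EW[X^p]\le\eul^p+C_5\tfrac{p}{\alpha-p}\eul^{p-\alpha}\le\eul^\alpha+C_5\tfrac{\alpha}{\alpha-p}$; since $\alpha-p<\alpha$ the first summand is at most $\alpha\eul^\alpha/(\alpha-p)$, so~(a) holds with $C_1=\alpha(\eul^\alpha+C_5)$.

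For the converse set $g(t):=t^\alpha\PP(X\ge t)$, so that~(c) reads $\int_0^r g(y)/y\,\dif y\le C_3\log r$ for $r\ge\eul$; note that both structural hypotheses force $g$ to be a positive (and log-convex, resp.\ log-concave) function on all of $[b,\infty)$, i.e.\ $\mathrm{ess }\sup X=\infty$. The heart of the matter is a monotonicity dichotomy. In the log-convex case I would show that $g$ is non-increasing on $[b,\infty)$: otherwise the right derivative of the convex function $\log g$ is strictly positive from some point $t_0$ on, so $g(t)\ge g(t_0)\eul^{c(t-t_0)}$ for $t\ge t_0$ with some $c>0$, whence $\int_0^r g(y)/y\,\dif y\ge\frac1r\int_{t_0}^r g(t_0)\eul^{c(y-t_0)}\,\dif y$ grows faster than any multiple of $\log r$ --- contradicting~(c). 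In the log-concave case with $g\ge\delta>0$, the analogous and simpler statement is that a concave function bounded from below on a half-line is non-decreasing, hence $g$ is non-decreasing on $[b,\infty)$.

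In the log-concave case this essentially finishes the proof: for any $r_0\ge b$ and every $r>r_0$ we have $C_3\log r\ge\int_{r_0}^r g(y)/y\,\dif y\ge g(r_0)\log(r/r_0)$, and letting $r\to\infty$ gives $g(r_0)\le C_3$; moreover $g(r_0)=r_0^\alpha<b^\alpha=g(b)\le C_3$ for $\eul\le r_0<b$ (which can occur only if $b>\eul$). Thus $g\le C_3$ on $[\eul,\infty)$ and $C_5=C_3$ works. In the log-convex case, $g$ being non-increasing gives $g(t)\le g(b)=b^\alpha$ for $t\ge b$ and $g(t)=t^\alpha<b^\alpha$ for $\eul\le t<b$, so it remains to bound $b^\alpha$; this follows again from~(c), since $\int_0^r g(y)/y\,\dif y\ge\int_0^{\min(b,r)}y^{\alpha-1}\,\dif y=\min(b,r)^\alpha/\alpha$ yields $b^\alpha\le\alpha C_3$ when $b\le\eul$ and $b^\alpha\le\alpha C_3\log b$ --- so that $b$, and hence $b^\alpha$, is bounded in terms of $\alpha$ and $C_3$ --- when $b>\eul$.

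The step I expect to demand the most care is this last one: turning the qualitative monotonicity statements into the clean quantitative conclusion on the full stated range $t\ge\eul$, with a constant $C_5$ controlled only by $\alpha$ and the constants of Proposition~\ref{prop_charac} rather than by the a priori unknown essential infimum or the location of the mass of $X$. The exponential-growth argument and the concave-bounded-below argument are both short; it is the bookkeeping around $b$ and the small-$r$ regime --- where one uses~(c) at arguments near $\max(b,\eul)$ and, if needed, the flexibility in the threshold noted after the proposition --- that has to be done with enough care to make the constants come out as stated.
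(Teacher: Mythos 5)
Your proposal is correct and proves the stated equivalence, but in the direction ``moment properties $\Rightarrow$ tail bound'' it follows a genuinely different route from the paper. Write $h(t):=t^\alpha\PP(X\ge t)$. In the log-convex case the paper starts from property (a): Chebyshev plus optimization gives $h(t)\le C_1\eul\log t$, and since a convex $\log h$ lying below the concave majorant $1+\log C_1+\log\log t$ must be non-increasing, the bound $h(t)\le C_1\eul$ is read off directly from the logarithmic bound near the left end of the range. You instead start from (c) and get monotonicity by the exponential-growth contradiction, which is equally valid; the price is that your final bound is $h\le h(b)=b^\alpha$, so you need a separate estimate of $b^\alpha$ from (c), and in the regime $b>\eul$ the inequality $b^\alpha\le C_3\log(b^\alpha)$ only yields a constant of order $C_3\log C_3$ rather than an $\alpha$-dependent multiple of $C_3$ (to be fair, the paper's claim $h(t)\le C_1\eul$ for all $t\ge\eul$ silently glosses over the same regime, where a factor $\log b$ appears, so this is a shared corner case rather than a gap of yours). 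In the log-concave case your argument is a genuine simplification: the observation that a concave function bounded below on a half-line is non-decreasing makes $h$ non-decreasing, after which $C_3\log r\ge\int_{r_0}^r h(y)y^{-1}\,\dif y\ge h(r_0)\log(r/r_0)$ and $r\to\infty$ finish immediately, whereas the paper distinguishes the cases $h$ unbounded (using $h(u)\ge\min(h(r),h(t_n))$) and $h$ bounded (the partial-integration limit computation \eqref{Gl1}--\eqref{Gl3}); in your version the hypothesis $h\ge\delta$ enters only through the monotonicity step, which subsumes both cases. The implication from the tail bound to (a) coincides with the paper's.
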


In particular, the conditions of Proposition \ref{prop_3} hold for $X \sim \mathrm{Par}(\alpha,b)$, so that we get back the correct tail behaviour this way. Assuming that $t^\alpha \PP(X \ge t) \ge \delta$ in the case of log-concavity has technical reasons, but one can easily see that it does not exclude any situations of interest. Indeed, otherwise we must have $t^\alpha \PP(X \ge t) \to 0$ as $t \to \infty$, and by log-concavity the rate of decay must be so fast that $X$ cannot be a heavy-tailed random variable.

\begin{proof} First recall that as already mentioned in the introduction, $\PP(X \geq t) \leq C_5/t^\alpha$ always implies part (a) of Proposition \ref{prop_charac} with $C_1 = \alpha C_5$ without any further assumptions. (To be precise, here we assume that the tail bound holds for any $t \ge 1$, but even if we can only access it for $t \ge \eul$ replacing $C_5$ by $\max(C_5,\eul^\alpha)$ will lead to the same result.)

Conversely, set $h(t) = t^\alpha \PP(X \ge t)$. If $h(t)$ is log-convex, assume part (a) of Proposition \ref{prop_charac}. As in \eqref{eq:log-error-tails} (with $b=1$), we have $h(t) \le C_1 \mathrm{e} \log(t)$ by Chebyshev's inequality and optimization. In particular, $\log(h(t))$ is upper-bounded by the concave function $1 + \log(C_1) + \log\log(t)$. Since $\log(h(t))$ is convex by assumption, this implies that $h(t)$ must be non-increasing. Indeed, by convexity, once there are $t_1 < t_2$ such that $\log(h(t_1)) < \log(h(t_2))$, it follows that all points $(t,\log(h(t)))$ for $t \ge t_2$ must lie above the line defined by $(t_1,\log h(t_1))$ and $(t_2,\log h(t_2))$, which eventually surpasses the graph of the function $1+\log(C_1)+\log\log(t)$. Consequently, for any $t \ge \mathrm{e}$, we have $h(t) \le C_1 \mathrm{e}$.

If $h(t)$ is log-concave, assume part (c) of Proposition \ref{prop_charac}. Fixing any $r \ge \mathrm{e}$, we first consider the case where $h(t)$ is unbounded, i.\,e.\ there exists a monotone sequence $(t_n)_{n \in \mathbb{N}}$ with $t_n \rightarrow \infty$ and $h(t_n) \rightarrow \infty$. Additionally, we can assume that $h(r) \leq h(t_n)$, which is true for large enough $n$. By log-concavity of $h$, we have
\begin{equation*}
  h(u) = h((1-s)r + s t_n ) \geq h(r)^{1-s} h(t_n)^s \geq \min\{h(r),h(t_n)\} = h(r)
\end{equation*}
for all $u =(1-s)r + s t_n$ with some $s \in [0,1]$. Thus, noting that $h(u)/u = u^{\alpha-1}\PP(X \ge u)$ and applying Proposition \ref{prop_charac} (c) it follows that
\begin{equation*}
     C_3 \log(t_n) \geq \int_r^{t_n} \frac{h(u)}{u} \, \dif u \geq h(r) \log(t_n/r).
\end{equation*}
Dividing by $\log(t_n)$ and letting $n \rightarrow \infty$ yields $h(r) \le C_3$. 
 
Now assume $h$ to be bounded by some constant $K>1$. Using the log-concavity again gives
 \begin{equation}\label{Gl1}
  C_3 \log(t) \geq \int_r^t \frac{h(u)}{u} \, \dif u \geq h(r) \Big( \frac{h(r)}{h(t)} \Big)^\frac{r}{t-r} \int_{r}^{t}  \Big( \frac{h(t)}{h(r)} \Big)^\frac{u}{t-r} \, \frac{\dif u}{u}.
\end{equation}
Recall that $h(t) \ge \delta$ for some $\delta > 0$ by assumption. Hence, $\delta \leq h(t) \leq K$, from where we deduce that
\begin{equation}\label{eq:limVerh}
  \lim_{t \rightarrow \infty} \Big( \frac{h(r)}{h(t)} \Big)^\frac{r}{t-r} =1.
\end{equation}
Writing $\beta(t) = \log( h(t)/h(r))$ and performing a change of variables, we have
\begin{equation}\label{Gl2}
  \int_{r}^{t}  \Big( \frac{h(t)}{h(r)} \Big)^\frac{u}{t-r} \, \frac{\dif u}{u} = \int_{\frac{r}{t-r}}^{\frac{t}{t-r}} \exp(\beta(t) u)  \, \frac{\dif u}{u}.
\end{equation}
Using the boundedness of $\beta(t)$, we find by partial integration that
\begin{align}
\lim_{t \rightarrow \infty} \frac{1}{\log(t)} \int_{\frac{r}{t-r}}^{\frac{t}{t-r}} \exp(\beta(t) u)  \, \frac{\dif u}{u}
&= \lim_{t\to\infty}\Big(\frac{\log(u) \exp(\beta(t) u)}{\log(t)} \Big|_{\frac{r}{t-r}}^{\frac{t}{t-r}}\notag\\
&\hspace{-2cm}- \frac{\beta(t)}{\log(t)} \int_\frac{r}{t-r}^{\frac{t}{t-r}} \log(u) \exp(\beta(t) u) \, \dif u\Big) = 1,\label{Gl3}
\end{align}
where we have used \eqref{eq:limVerh}. Dividing both sides of \eqref{Gl1} by $\log (t)$ and recalling \eqref{Gl2}, we obtain
\[
C_3 \geq h(r) \Big( \frac{h(r)}{h(t)} \Big)^\frac{r}{t-r} \frac{1}{\log(t)} \int_{\frac{r}{t-r}}^{\frac{t}{t-r}} \exp(\beta(t) u)  \, \frac{\dif u}{u}.
\]
Now letting $t \to \infty$ and using \eqref{eq:limVerh} and \eqref{Gl3}, it follows that $h(r) \le C_3$.
\end{proof}

\subsection{Examples}
As we have seen in Proposition \ref{prop_3}, in certain situations one may avoid the logarithmic error \eqref{eq:log-error-tails} which appears when combing Pareto-type moment bounds with the Chebyshev inequality. Next, we will present a number of distributions on the real line or, equivalently, random variables, which demonstrate that in general, this is not possible. Here we also consider situations where the tails satisfy certain regularity properties.

We first recall a number of basic definitions and results. A function $L \colon (0,\infty) \to (0,\infty)$ is called slowly varying if for any $a > 0$,
\[
\lim_{t \to \infty} \frac{L(at)}{L(t)} = 1.
\]
Moreover, a function $g \colon (0,\infty) \to (0,\infty)$ is called regularly varying if there exists $\beta \in \mathbb{R}$ such that for any $a > 0$,
\[
\lim_{t \to \infty} \frac{g(at)}{g(t)} = a^\beta.
\]
The parameter $\beta$ is called the index of $g$. By Karamata's characterization theorem, if $g$ is regularly varying of index $\beta$, it has to be of the form $g(t) = t^\beta L(t)$ for some slowly varying function $L$. For a reference, see \cite[Ch.\ 1.3 \& 1.4]{bingham-goldie-teugels:1987}.

\begin{proposition}\label{prop:tailsopt}
    Fix any $\alpha > 0$, and let $h \colon [0,\infty) \to [0,\infty)$ be any increasing function such that $h(t) \to \infty$ as $t \to \infty$ and $\log h(t) < \rho (\alpha\wedge 1)\log\log(t)$ for all $t$ large enough and some $\rho\in(0,1)$. Then, there exists a random variable $X$ satisfying the properties from Proposition \ref{prop_charac} such that
    \begin{equation}\label{eq:Flankenangen}
    \limsup_{t \to \infty} \frac{t^\alpha \mathbb{P}(X \ge t)}{h(t)} \ge 1.
    \end{equation}
    If $\log h(t) = o(\log\log(t))$, $X$ can be chosen such that its tails are a regularly varying function of $t$ with index $-\alpha$.
\end{proposition}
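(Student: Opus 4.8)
The plan is to construct $X$ explicitly via its tail $\overline{F}(t):=\PP(X\ge t)$, taking it to be $t^{-\alpha}$ away from a very sparse sequence of scales, where short plateaus are inserted that create controlled spikes of $t^\alpha\overline{F}(t)$. Put $\gamma:=\rho(\alpha\wedge1)$, so $\gamma\in(0,\alpha\wedge1)$ and, by hypothesis, $1\le h(t)<(\log t)^\gamma$ for all large $t$. Fix the doubly–exponentially growing sequence $a_n:=\exp(\eul^n)$ — so that $\log a_n=\eul^n$ grows geometrically — and set $b_n:=a_n/h(a_n)^{1/\alpha}$; after dropping finitely many indices we may assume $1<b_1<a_1<b_2<a_2<\cdots$, using that $a_n/b_n=h(a_n)^{1/\alpha}\le(\log a_n)^{\gamma/\alpha}\le\log a_n\ll a_n/a_{n-1}$ (here $\gamma\le\alpha$ enters). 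Define a non–negative $X$, with $X\ge1$, by $\overline{F}(t):=1$ for $t\le1$, $\overline{F}(t):=t^{-\alpha}$ for $t\in(a_{n-1},b_n]$ ($n\ge1$, $a_0:=1$), and $\overline{F}(t):=b_n^{-\alpha}$ for $t\in[b_n,a_n]$. Then $\overline{F}$ is non–increasing with $\overline{F}(0)=1$ and $\overline{F}(\infty)=0$ (the total mass telescopes to $1$), hence a genuine tail; $X$ has a Pareto–type density on $\bigcup_n(a_{n-1},b_n)$ and an atom at each $a_n$, and $t^\alpha\overline{F}(t)$ equals $1$ on the intervals $(a_{n-1},b_n]$ and climbs from $1$ to $(a_n/b_n)^\alpha=h(a_n)$ across each plateau.

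The tail bound \eqref{eq:Flankenangen} is then immediate: $\overline{F}(a_n)=b_n^{-\alpha}$ yields $a_n^\alpha\PP(X\ge a_n)=h(a_n)$, so the ratio in \eqref{eq:Flankenangen} equals $1$ along $a_n\to\infty$. The substance is property (a) of Proposition~\ref{prop_charac}, i.e.\ $\EW[X^p]\le C_1/(\alpha-p)$. Using $\EW[X^p]=p\int_0^\infty t^{p-1}\overline{F}(t)\,\dif t$ and the explicit $\overline{F}$, one decomposes $\EW[X^p]=1+A_p+B_p$, with $A_p=p\sum_n\int_{a_{n-1}}^{b_n}t^{p-1-\alpha}\,\dif t$ (the ``Pareto'' part) and $B_p=p\sum_n b_n^{-\alpha}\int_{b_n}^{a_n}t^{p-1}\,\dif t$ (the ``plateau'' part), and writes $\varepsilon:=\alpha-p\in(0,\alpha)$, the aim being to bound $\varepsilon\,\EW[X^p]$ uniformly. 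Since $\int_{a_{n-1}}^{b_n}t^{p-1-\alpha}\,\dif t=(a_{n-1}^{-\varepsilon}-b_n^{-\varepsilon})/\varepsilon$ and $b_n\le a_n$, one gets $\varepsilon A_p=p\sum_n(a_{n-1}^{-\varepsilon}-b_n^{-\varepsilon})\le p\sum_n(a_{n-1}^{-\varepsilon}-a_n^{-\varepsilon})\le\alpha$ by telescoping. For $B_p$, bound $b_n^{-\alpha}\int_{b_n}^{a_n}t^{p-1}\,\dif t\le b_n^{-\alpha}a_n^p/p=a_n^{-\varepsilon}(a_n/b_n)^\alpha/p=a_n^{-\varepsilon}h(a_n)/p$, so $\varepsilon B_p\le\varepsilon\sum_n a_n^{-\varepsilon}h(a_n)$; inserting $h(a_n)<(\log a_n)^\gamma=(\eul^n)^\gamma$ and $a_n^{-\varepsilon}=\eul^{-\varepsilon\eul^n}$, and writing $v_n:=\eul^n$, the elementary estimate $\sum_{n\ge1}v_n^{\gamma}\eul^{-\varepsilon v_n}\le C_\gamma\varepsilon^{-\gamma}$ (compare the unimodal function $x\mapsto\eul^{\gamma x-\varepsilon\eul^x}$ with its maximum and its integral) gives $\varepsilon B_p\le C_\gamma\varepsilon^{1-\gamma}\le C_\gamma\alpha^{1-\gamma}$. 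Altogether $\varepsilon\,\EW[X^p]\le\varepsilon+\alpha+C_\gamma\alpha^{1-\gamma}\le2\alpha+C_\gamma\alpha^{1-\gamma}=:C_1$, as desired.

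For the final assertion, under the stronger hypothesis $\log h(t)=o(\log\log t)$, I would replace each narrow plateau by a wide smooth bump: choose $W_n\to\infty$ with $\log W_n\gg\log h(a_n)$, so that the slope of $\log(t^\alpha\overline{F}(t))$ tends to $0$ and $t^\alpha\overline{F}(t)$ becomes slowly varying, yet still $\log W_n=o(\log\log a_n)$, and let $t^\alpha\overline{F}(t)$ ramp smoothly from $1$ up to $h(a_n)$ across the multiplicative window of width $W_n$ about $a_n$ and back down; then $\overline{F}(t)=t^{-\alpha}L(t)$ with $L$ slowly varying and $\limsup_t L(t)/h(t)\ge1$, i.e.\ regularly varying tails of index $-\alpha$. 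The moment estimate is redone as above, the only difference being that the bump term acquires an extra factor of order $\varepsilon\log W_n$ (since $\int_{\text{bump}}t^{p-1-\alpha}\,\dif t\le a_n^{-\varepsilon}(W_n^\varepsilon-W_n^{-\varepsilon})/\varepsilon$ and $W_n^\varepsilon-W_n^{-\varepsilon}\lesssim\varepsilon\log W_n$ when $\varepsilon\log W_n\le1$); one is thus led to control $\varepsilon\sum_n h(a_n)\log W_n\,a_n^{-\varepsilon}$ plus a routine remainder from the indices with $\varepsilon\log W_n>1$, and $\log h=o(\log\log)$ is exactly what makes $h(a_n)\log W_n=\eul^{o(n)}$, so the same geometric–sum bound closes the argument.

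The main obstacle is the moment bound: naively a single plateau at scale $a_n$ inflates $\EW[X^p]$ by roughly $h(a_n)$ as $p\to\alpha$, and $\sum_n h(a_n)=\infty$, so the estimate can only succeed if one (i) keeps the factor $\alpha-p$ attached to the plateau sum instead of bounding $\EW[X^p]$ first, and (ii) spaces the plateaus doubly–exponentially far apart, so that $\sum_n h(a_n)a_n^{-(\alpha-p)}$ becomes a geometric sum of order $(\alpha-p)^{-\gamma}$ with $\gamma<1$, leaving only a harmless $(\alpha-p)^{1-\gamma}$. Calibrating the growth of $a_n$ against the hypothesis $\log h(t)<\rho(\alpha\wedge1)\log\log t$ is the heart of the matter; the regularly varying refinement then additionally demands balancing the bump widths $W_n$ against this estimate, which is precisely where $\log h(t)=o(\log\log t)$ is needed.
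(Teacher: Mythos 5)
Your construction is correct, but it takes a genuinely different route from the paper's. The paper builds the tail in Karamata form, $g(t)=t^{-\alpha}\exp\bigl(\int_1^t\varepsilon(s)s^{-1}\,\dif s\bigr)$ with a piecewise constant index function $\varepsilon$ that ramps up to $\gamma(n)=\log h(b_n)/n$ on $[a_n,b_n)$ and back down (with $a_n=\exp(n\eul^n)$), and verifies property (c) of Proposition \ref{prop_charac}; since the construction is already in representation form, the regularly varying refinement is immediate from Karamata's representation theorem ($\varepsilon(t)\to0$ exactly when $\log h=o(\log\log)$), so a single construction serves both claims. You instead prescribe the tail explicitly: exact Pareto $t^{-\alpha}$ off a doubly exponentially sparse set of scales, with flat plateaus (and atoms at $a_n$) pushing $t^\alpha\PP(X\ge t)$ up to $h(a_n)$, and you verify property (a) directly. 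Your key steps are sound: the telescoping bound for the Pareto part, the estimate $\varepsilon\sum_n h(a_n)a_n^{-\varepsilon}\le C_\gamma\varepsilon^{1-\gamma}$ with $\gamma=\rho(\alpha\wedge1)<1$ (sum versus integral plus maximum of the unimodal function), and the finitely many indices where $h(a_n)<(\log a_n)^\gamma$ may fail only affect the constant, which is allowed. This is arguably more elementary (no need for the machinery behind condition (c)), at the price that the plateau construction is not regularly varying --- the downward jump by the factor $h(a_n)$ at $a_n$ destroys slow variation --- so the final claim forces your separate smoothed-bump variant. That part is only sketched, but the sketch is essentially right: taking bumps of multiplicative width $W_n$ with $\log h(a_n)\ll\log W_n=o(\log\log a_n)$ makes the logarithmic slope of $t^\alpha\PP(X\ge t)$ tend to $0$ (hence slow variation, e.g.\ again via Karamata), keeps the peak value $h(a_n)$, and the moment bound survives since $h(a_n)\log W_n=\eul^{o(n)}$ still gives a sum of order $\varepsilon^{-\delta}$ for every $\delta>0$, while the indices with $\varepsilon\log W_n>1$ only occur once $\eul^n\ge W_n>\eul^{1/\varepsilon}$, where $a_n^{-\varepsilon}$ is superexponentially small; if you write this up, make these last estimates and the monotonicity of the smoothed tail explicit --- the paper's construction avoids this case distinction entirely.
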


\begin{proof}
  To start, let us consider a general class of functions of the form
  \begin{equation}\label{Startpunkt}
      g(t) := t^{-\alpha} L(t)
  \end{equation}
  with $L(t) := \exp( \int_1^t \varepsilon(s) s^{-1} \, \dif s)$ and a bounded measurable function $\varepsilon$. If $g'(t) \leq 0$ and $g(t) \to 0$ as $t \to \infty$, the function $g$ gives rise to the tails of a random variable on $[1,\infty)$ by setting $\mathbb{P}(X \ge t) := g(t)$, noting that $g(1)=1$. Assuming $\varepsilon$ to be piecewise continuous, the condition $g'(t) \le 0$ holds if and only if $\varepsilon(t) \leq \alpha$, since we have
  \begin{equation*}
       g'(t) = (-\alpha + \varepsilon(t)) t^{-1} g(t).
  \end{equation*}
  In particular, if $\varepsilon(t) \le \beta$ for some $\beta < \alpha$ and all $t \ge 1$ we also have $g(t) \to 0$ (indeed, note that in this case, $0 \leq g(t) \leq t^{-\alpha + \beta}$).
  
  To specify a choice of $\varepsilon$, now define sequences $a_n := \exp(n \exp(n))$ and $b_n := \exp(n \exp(n)+n)$. Moreover, let
  \[
  \gamma(n) := \log(h(b_n))/n = \frac{\log(h(b_n))}{\log\log(b_n)}(1+o(1)).
  \]
  By assumption, for $n$ large enough we have $\gamma(n) \le \rho(\alpha \wedge 1)$. On the other hand, we can also assume that $\gamma(n) \ge 1/n$ for all $n$ without loss of generality. Indeed, if $\gamma(n')\le 1/n'$ for some subsequence $n'$, the sequence $h(b_{n'}) = \exp(n'\gamma(n'))$ is absolutely bounded, which contradicts the assumption of $h(t) \to \infty$ as $t \to \infty$. For $t \in [a_n, a_{n+1})$, set
  \begin{equation}
      \label{eq:eps_bsp1}
      \varepsilon(t) =  \begin{cases} \gamma(n) &  \text{ if } t \in [a_n,b_n) \\ - \gamma(n) & \text{ if } t \in [b_n,b_n \exp(n)) \\
       0 & \text{ if } t \in [b_n \exp(n),a_{n+1}) \end{cases}.
   \end{equation}

   It is easy to see that for $t \in [a_n, a_{n+1})$,
   \[
   \int_1^t \frac{\varepsilon(s)}{s} \, \dif s = \begin{cases}
       \gamma(n)(\log(t)-\log(a_n)) & \text{ if } t \in [a_n,b_n)\\
\gamma(n)(n + \log(b_n) - \log(t)) & \text{ if } t \in [b_n, b_n\exp(n))\\
0 & \text{ if } t \in [b_n \exp(n),a_{n+1})
   \end{cases},
  \]
  and hence,
  \[
   L(t) = \begin{cases}
       \frac{t^{\gamma(n)}}{a_n^{\gamma(n)}} & \text{ if } t \in [a_n,b_n)\\
\exp(n\gamma(n)) \frac{b_n^{\gamma(n)}}{t^{\gamma(n)}} & \text{ if } t \in [b_n, b_n\exp(n))\\
1 & \text{ if } t \in [b_n \exp(n),a_{n+1})
   \end{cases}.
  \]
  In particular, $b_n^{\alpha}g(b_n) = L(b_n) = \exp(n\gamma(n)) = h(b_n)$ by definition. Hence, \eqref{eq:Flankenangen} holds for a random variable $X$ which has tails $\PP(X \ge t) = g(t)$ for any $t \ge 1$.
   
   It remains to check that $X$ satisfies the equivalent properties from Proposition \ref{prop_charac}, of which we will verify (c). To this end, we first consider $r \in [a_{n},b_{n}]$ and note that $\log(r) \geq \log(a_{n}) = n \exp(n) \geq \log(b_{n})/2$. Thus, we may assume that $r= b_{n}$. For any $k \le n$, we get
   \begin{align*}
       \int_{a_k}^{b_k} t^{\alpha-1} \PP(X \geq t) \, \dif t = \int_{a_k}^{b_k} \frac{L(t)}{t} \, \dif t &= a_k^{-\gamma(k)} \int_{a_k}^{b_k} t^{-1+\gamma(k)} \, \dif t \\
       &= \gamma(k)^{-1} \Big(\Big( \frac{b_k}{a_k} \Big)^{\gamma(k)}  -1\Big) \leq \gamma(k)^{-1} \exp(k\gamma(k)).
   \end{align*}
   Summing this up for $k \le n$ gives a contribution of at most
   \[
   \sum_{k=1}^n \gamma(k)^{-1} \exp(k\gamma(k)) \le n \max_{k\le n}\gamma(k)^{-1} \exp(n\gamma(n)) \le C \log(b_{n})
   \]
   for a suitable constant $C > 0$. Here, the first step follows from $\gamma(k)k \le \gamma(n)n$ by definition of $\gamma(k)$ and since $h$ is increasing. To see the second step, note that the slightly stronger inequality $n \max_{k\le n}\gamma(k)^{-1} \exp(n\gamma(n)) \le C\log(a_{n})$ can be rewritten as $\exp(n\gamma(n))/\min_{k\le n}\gamma(k) \le C \exp(n)$ and use that $1/n \le \gamma(n) \le \rho<1$. If $r \in [b_k,\exp(k)b_k]$, then again $\log(r) \geq \log(b_k) \geq \log(\exp(k) b_k)/2$ and thus we may consider $r = \exp(k)b_k$. Here we have
   \begin{equation*}
       \int_{b_k}^{b_k \exp(k)} \frac{L(t)}{t} \, \dif t = \exp( k \gamma(k) ) b_k^{\gamma(k)} \int_{b_k}^{b_k \exp(k)} \frac{1}{t^{1+\gamma(k)}} \, \dif t \leq \gamma(k)^{-1} \exp(k\gamma(k)),
   \end{equation*}
   so that we may argue similarly as above. Finally, if $r \in [b_k \exp(k),a_{k+1}]$,
   \begin{equation*}
       \int_{b_k \exp(k)}^{a_{k+1}} \frac{L(t)}{t} \, \dif t = \int_{b_k \exp(k)}^{a_{k+1}}  \frac{1}{t} \, \dif t \leq \log(a_{k+1}) - \log(b_k).
   \end{equation*}
   Because of $a_k \leq b_k$, summing up and telescoping yields a bound of $\log(r)$ again. In total, we have thus verified condition (c) of Proposition \ref{prop_charac}.

   Finally, note that if $\log h(t) = o(\log\log(t))$, we have $\varepsilon(t) \rightarrow 0$, so that by Karamata's representation theorem (cf.\ \cite[Th.\ 1.3.1]{bingham-goldie-teugels:1987}), $L$ is slowly varying and hence, $g$ is regularly varying with index $-\alpha$ in this case.
\end{proof}

As for the auxiliary function $\gamma(n)$ from the proof of Proposition \ref{prop:tailsopt}, simple examples like $\gamma(n):=n^{\delta-1}$ for any $\delta \in (0,1)$ correspond to $b_n^\alpha\PP(X \ge b_n) = \exp(n^\delta)) \approx \exp((\log\log(b_n))^\delta)$. In fact, for a first understanding it can be instructive to adapt the proof to the case of $\gamma(n) = n^{-1/2}$.

In view of Proposition \ref{prop_3}, one may wonder whether in order to avoid the logarithmic error, it might already suffice to require the tails of $X$ to be log-convex, which would sharpen Proposition \ref{prop_3} where $t^\alpha\PP(X \ge t)$ was assumed to be log-convex. It turns out that this is not true as we demonstrate in the following proposition, whose proof is based on the proof of Proposition \ref{prop:tailsopt} together with a smoothening argument.

\begin{proposition}\label{prop-logcon}
    In the situation of Proposition \ref{prop:tailsopt}, under the assumption that $\log h(t) < \rho(\alpha\wedge 1)\log\log(t)$ for all $t$ large enough and some $\rho \in (0,c_1)$, the random variable $X$ can be chosen such that its tails are log-convex. Here, $c_1 > 0.48$ is some absolute constant.
\end{proposition}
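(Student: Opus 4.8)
The plan is to start from the piecewise-power construction of Proposition~\ref{prop:tailsopt} and replace the piecewise-constant profile $\varepsilon$ by a smoother one that makes $\log g(t) = -\alpha\log t + \int_1^t \varepsilon(s)s^{-1}\,\dif s$ a convex function of $\log t$, which is exactly log-convexity of the tails in the variable $t$. Writing $v = \log t$ and $\psi(v) := \log g(\eul^v) = -\alpha v + \int_0^v \tilde\varepsilon(w)\,\dif w$ with $\tilde\varepsilon(w) := \varepsilon(\eul^w)$, we have $\psi'(v) = -\alpha + \tilde\varepsilon(v)$, so convexity of $\psi$ is equivalent to $\tilde\varepsilon$ being non-decreasing. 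The ``up-then-down'' shape of $\varepsilon$ in \eqref{eq:eps_bsp1} (values $\gamma(n), -\gamma(n), 0$ on consecutive blocks) obviously violates this, so the construction cannot be used verbatim; but the point is that the bump on $[a_n,b_n]$ that produces the large value $g(b_n) = b_n^{-\alpha}h(b_n)$ can be produced instead by an $\tilde\varepsilon$ that rises monotonically across a much longer range and then, crucially, must come back down somewhere — and that coming-down is where log-convexity forbids a sawtooth. The fix is to let $\tilde\varepsilon$ approach a \emph{limit} value rather than oscillate: one designs a single non-decreasing $\tilde\varepsilon$ which is $0$ for a long stretch, then climbs to and saturates at some level $\ell < \alpha$ over the block where we want the tail to be heavy, and the ``eventually decreasing'' behavior of $g$ is guaranteed solely by $\ell < \alpha$ (so $g(t)\le t^{-(\alpha-\ell)}L_0 \to 0$), never by letting $\tilde\varepsilon$ dip below $0$. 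The constant $c_1 > 0.48$ will arise as the largest $\rho$ for which one can fit the required total increment of $\log g$ (of order $\gamma(n)\cdot n = \log h(b_n)$) into the available window while keeping $\tilde\varepsilon$ below a fixed ceiling; a ceiling like $\ell = 1/2$ (hence $\rho < 1/2$, and $0.48$ after accounting for lower-order slack) is the natural threshold.

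Concretely, the steps I would carry out are: (i) redefine the block endpoints so that between consecutive ``heavy'' blocks there is a long interval on which $\tilde\varepsilon$ is constant (flat, at whatever level it last reached), and on the heavy block it increases by the prescribed amount $\gamma(n)n$ (in the $v$-scale, i.e.\ over a $v$-interval of length $n = \log(b_n/a_n)$) while staying $\le 1/2$; here one uses $\gamma(n) \le \rho(\alpha\wedge 1)$ to ensure the slope $\gamma(n)$ of the climb stays under the ceiling, and $\gamma(n)\ge 1/n$ (as before) to keep $h(b_n)\to\infty$. (ii) Verify $g' \le 0$ and $g(t)\to 0$: since $\tilde\varepsilon$ is non-decreasing and bounded by $1/2 < \alpha$ (as $\alpha\wedge1$ makes $\rho(\alpha\wedge1)\le\rho<c_1\le 1/2$ when $\alpha\ge1$, and when $\alpha<1$ the ceiling should be taken as $\rho\alpha < \alpha$), we get $\psi'(v) = -\alpha+\tilde\varepsilon(v) < 0$, giving both monotonicity and exponential-in-$v$, i.e.\ power, decay. (iii) Re-establish \eqref{eq:Flankenangen}: by construction $L(b_n) = \exp(\int_0^{\log b_n}\tilde\varepsilon) \ge \exp(\gamma(n)n) = h(b_n)$ (possibly $\ge$ rather than $=$ because earlier blocks have left $\tilde\varepsilon$ at a positive level, which only helps), so $\limsup t^\alpha\PP(X\ge t)/h(t) \ge 1$ still holds along $(b_n)$. (iv) Re-verify condition~(c) of Proposition~\ref{prop_charac}: $\int_0^{\log r} u^{\alpha-1}\PP(X\ge u)\,\dif u = \int_0^{\log r} \eul^{\psi(v)}\,\dif v$, and since $\psi(v) = -\alpha v + \int_0^v\tilde\varepsilon \le -\alpha v + \int_0^v\varepsilon_{\mathrm{old}}^+$ is controlled by essentially the same block sums as in the original proof — the dominant contribution is again $\gamma(n)^{-1}\exp(\gamma(n)n) \le n\max_k\gamma(k)^{-1}\exp(\gamma(n)n) \le C\log b_n$, using $1/n\le\gamma(n)\le\rho<1$ exactly as before — the bound $\le C\log r$ survives. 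Finally, log-convexity of the tails on $[\operatorname{ess}\inf X,\infty) = [1,\infty)$ is immediate from $\psi$ convex.

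The main obstacle I anticipate is the bookkeeping in step (i)–(iii): because $\tilde\varepsilon$ can no longer return to $0$, its accumulated level grows with $n$, and one must check that this accumulation does not (a) push $\tilde\varepsilon$ above the ceiling $1/2$ over the long run — which forces the flat stretches to be long enough that the per-block increments $\gamma(n)n$ are spread out, i.e.\ one needs $\sum_{k\le n}\gamma(k)$-type quantities to stay bounded by $1/2$, hence the restriction to $\rho < c_1$ with $c_1$ slightly below $1/2$ after summing a convergent-looking series — nor (b) spoil the decay or the integral bound in (ii),(iv). Making the threshold explicit as ``$c_1 > 0.48$'' will come down to choosing the flat-block lengths (say, a geometric schedule) and summing the resulting geometric series of increments; I would pick the simplest schedule that yields a clean numerical constant and note that no attempt at optimality is made. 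A secondary, more cosmetic obstacle is that ``log-convex on $[b,\infty)$'' with $b = \operatorname{ess}\inf X = 1$ requires $g$ to be genuinely convex in $\log t$ including near $t=1$; since $g(1)=1$ and $\tilde\varepsilon(0)=0$, $\psi$ starts with slope $-\alpha$ and $\tilde\varepsilon$ is non-decreasing thereafter, so convexity at the left endpoint is automatic — but it is worth a sentence to confirm the first block is placed far enough out that the early behaviour is the pure power $t^{-\alpha}$.
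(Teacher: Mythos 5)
Your plan has a fatal structural flaw, located exactly at steps (i) and (iv). You insist that $\tilde\varepsilon(v)=\varepsilon(\eul^v)$ be non-negative and non-decreasing (``climb and saturate, never come back down''), so that $L(t)=\exp\bigl(\int_1^t\varepsilon(s)s^{-1}\,\dif s\bigr)$ is non-decreasing. But \eqref{eq:Flankenangen} with $h(t)\to\infty$ then forces $L(t)\to\infty$, and this is incompatible with the properties of Proposition \ref{prop_charac}: for any $M$ pick $t_M$ with $L\ge M$ on $[t_M,\infty)$; then
\[
\frac{1}{\log r}\int_0^r y^{\alpha-1}\PP(X\ge y)\,\dif y \;\ge\; \frac{1}{\log r}\int_{t_M}^r \frac{L(y)}{y}\,\dif y \;\ge\; M\Bigl(1-\frac{\log t_M}{\log r}\Bigr),
\]
which tends to $M$ as $r\to\infty$, so no finite $C_3$ can serve in part (c). Equivalently, once $\tilde\varepsilon$ has climbed above some $\delta_0>0$ it stays there, the tails are eventually bounded below by a constant times $t^{-(\alpha-\delta_0)}$, and $\EW[X^p]=\infty$ for $p\in(\alpha-\delta_0,\alpha)$, so (a) fails too. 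Hence your step (iv) (``the bound $\le C\log r$ survives exactly as before'') is false: the flat stretch after the $n$-th climb alone contributes about $h(b_n)\log(a_{n+1}/b_n)$, a factor $h(b_n)\to\infty$ too large. The ``coming down'' of $\varepsilon$ that you try to eliminate is unavoidable; no amount of scheduling of flat blocks can rescue a monotone $L$.

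The root cause is a misreading of the target property. In this paper, log-convex tails means that $\log\PP(X\ge t)$ is convex \emph{as a function of $t$} on $[\mathrm{ess}\inf X,\infty)$, not as a function of $\log t$; for non-increasing tails your condition (equivalently, $\tilde\varepsilon$ non-decreasing) is strictly stronger (e.g.\ $\eul^{-t}$ is log-convex but not convex in $\log t$), and, as shown above, so strong that it contradicts the statement to be proved. Convexity in $t$ does tolerate a decreasing $\varepsilon$: for $g$ as in \eqref{Startpunkt} it is equivalent to the pointwise condition \eqref{eq:convex_cond_eps}, $t\varepsilon'(t)+\alpha\ge\varepsilon(t)$, and since the descent back to $0$ takes place over the huge ranges $[a_n,b_n]$ and $[b_n,b_n\eul^n]$, a linear ramp has $t\lvert\varepsilon'(t)\rvert$ of order $\gamma(n)/c_n$ only. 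This is what the paper's proof does: it keeps the up--down sawtooth \eqref{eq:eps_bsp1} but replaces the rectangular bumps by triangular profiles $\Lambda(s)=\min\{2s,2-2s\}$ normalized by $c_n=\int_0^1\Lambda(t)(t+(\eul^n-1)^{-1})^{-1}\,\dif t$, re-verifies \eqref{eq:Flankenangen} and Proposition \ref{prop_charac}(c) as in Proposition \ref{prop:tailsopt}, and checks \eqref{eq:convex_cond_eps} on the decreasing ramps, where it reduces to $2\gamma(n)c_n^{-1}(1+(\eul^n-1)^{-1})\le\alpha$; this holds for large $n$ precisely because $\gamma(n)\le\rho\alpha$ with $\rho<c_1$ and $c_n\uparrow\log 4\ge 2c_1$. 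In particular, the constant $c_1\approx 0.480156$ is just the first correction factor $\int_0^1\Lambda(t)(t+(\eul-1)^{-1})^{-1}\,\dif t$; it does not arise from a ceiling at $1/2$ or from summing accumulated increments, and no such bookkeeping is needed once the sawtooth is retained.
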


\begin{proof}
  Consider a function $g$ as in \eqref{Startpunkt} and set $\ell(t) := \log (g(t))$. Assuming $\varepsilon(s)$ to be continuously differentiable, one can check in the same way as in the proof of Proposition \ref{prop:tailsopt} that $\ell''(t) \geq 0$ is equivalent to $g''(t) g(t) - (g'(t))^2 \geq 0$ and thus to
  \begin{equation}
      \label{eq:convex_cond_eps}
      \varepsilon'(t) + \frac{\alpha}{t} \geq \frac{\varepsilon(t)}{t}.
  \end{equation}
  Therefore, once \eqref{eq:convex_cond_eps} holds, the corresponding random variable has log-convex tails. By continuity of $\ell'$, the same arguments remain valid if $\varepsilon(t)$ is continuous but only piecewise continuously differentiable (with \eqref{eq:convex_cond_eps} for all continuity points of $\varepsilon'$).
  
  To construct such a function, we write $\Lambda(s) := \min\{2s, 2- 2s\} \in [0,1]$ for $s \in [0,1]$ and replace \eqref{eq:eps_bsp1} by the smoothed version
    \begin{equation*}
      \varepsilon(t) =  \begin{cases} \gamma(n) c_n^{-1} \Lambda( \tfrac{t-a_n}{b_n-a_n})  &  \text{ if } t \in [a_n,b_n) \\ - \gamma(n)  c_n^{-1} \Lambda( \tfrac{t-b_n}{b_n (\eul^n-1)}) & \text{ if } t \in [b_n,b_n \exp(n)) \\
       0 & \text{ if } t \in [b_n \exp(n),a_{n+1})
       \end{cases}
   \end{equation*}
   with correction factor
   \begin{equation*}
       c_n := \int_0^1 \frac{\Lambda(t)}{t+(\eul^n-1)^{-1}} \, \dif t = \int_{a_n}^{b_n} \Lambda( \tfrac{t-a_n}{b_n-a_n}) t^{-1} \, \dif t = \int_{b_n}^{b_n \exp(n)} \Lambda( \tfrac{t-b_n}{b_n (\eul^n-1)}) t^{-1} \, \dif t.
   \end{equation*}
   Note that $c_n$ is monotoneously increasing with
   \[
   c_1 \approx 0.480156,\qquad \lim_{n \rightarrow \infty} c_n = \int_0^1 \frac{\Lambda(t)}{t} \, \dif t = \log(4).
   \]
   In particular, as in the proof of Proposition~\ref{prop:tailsopt} we have that $\varepsilon(t) \le \rho'(\alpha \wedge 1)$ for $\rho':=\rho c_1^{-1}\in(0,1)$, so that $g(t)$ is monotoneously decreasing with $g(t) \to 0$, and $\varepsilon(t) \to 0$ if $\log h(t) = o(\log\log(t))$. Moreover, it also follows that
   \[
   \int_{1}^{b_n} \varepsilon(s)s^{-1} \, \dif s = \gamma(n)n,\qquad \int_{1}^{b_n\exp(n)} \varepsilon(s) s^{-1} \, \dif s = 0,
   \]
   so that the relation $b_n^\alpha g(b_n) = \exp(n\gamma(n)) = h(b_n)$ continues to hold. Furthermore, part (c) of Proposition \ref{prop_charac} can be verified as in the proof of Proposition \ref{prop:tailsopt} by invoking the trivial bound $\Lambda(t) \le 1$.

   It remains to check \eqref{eq:convex_cond_eps}, which is trivial if $\varepsilon'(t) \ge 0$. For $t \in ((b_n-a_n)/2,b_n)$, \eqref{eq:convex_cond_eps} reads
   \[
   \varepsilon(t) = \frac{\gamma(n)}{c_n}\Big(2 - 2 \frac{t-a_n}{b_n-a_n}\Big) \le \alpha - 2\frac{\gamma(n)}{c_n}\frac{t}{b_n-a_n} = \alpha - t\varepsilon'(t),
   \]
   and the inequality in the middle is equivalent to
   \[
   2\frac{\gamma(n)}{c_n} \Big(\frac{1}{\eul^n -1} + 1\Big) \le \alpha.
   \]
   Recalling that $\gamma(n) \le \rho\alpha$ with $\rho < c_1$ and that $c_1/c_n \le 1/2$ for $n$ large enough, this inequality is true for all large enough $n$ (formally, one may replace $\varepsilon(t)$ by $0$ for small values of $n$). Similar arguments also hold in the remaining case of $t \in (b_n, b_n(1+\eul^n)/2)$, which is even simpler since $\varepsilon(t) \le 0$ in this range.
\end{proof}

\section{Tail inequalities for polynomial chaos}

\subsection{Results}
Typical applications of $L^p$ norm inequalities include concentration results of higher order, in particular bounds for polynomials (or polynomial chaos) of some order $d$ in independent random variables $X_1, \ldots, X_n$. Concentration results of this type have been shown in \cite{latala:2006} (polynomials in Gaussian random variables), \cite{adamczak-wolff:2015} (functions of sub-Gaussian random variables) and \cite{goetze-sambale-sinulis:2021} (polynomials in $\alpha$-sub-exponential random variables), for example. All these papers in fact prove bounds on the respective $L^p$ norms, from which concentration bounds are subsequently derived.

We may proceed similarly, using the following generalization of \eqref{eq:log-error-tails}: given a random variable $Z$ which satisfies the moment condition
\[
\lVert Z \rVert_{L^p} \le \gamma \Big(\frac{\alpha}{\alpha - p}\Big)^{\frac{\beta}{p}}
\]
for any $p \in (0,\alpha)$ and some $\gamma,\beta>0$, a standard Chebyshev-type argument shows that
\begin{equation}\label{eq:log-error-tails-gen}
\mathbb{P}(|Z| \ge t) \le (\alpha/\beta)^\beta \mathrm{e}^\beta \log(t/\gamma)^\beta (\gamma/t)^\alpha
\end{equation}
for any $t>\gamma \eul^{\beta/\alpha}$ (here one argues similarly as in \eqref{eq:log-error-tails}, choosing $p = \alpha - \beta/\log(t/\gamma)$).

In the sequel, for a random vector $X=(X_1, \ldots, X_n)$ with independent centered components (i.\,e., $\mathbb{E}[X_i] = 0$ for all $i$), we study functions of type
\begin{equation}\label{multPol}
f_d(X) := f_{d,A}(X) := \sum_{i_1, \ldots, i_d=1}^n a_{i_1 \ldots i_d} X_{i_1} \cdots X_{i_d},
\end{equation}
where $A = (a_{i_1 \ldots i_d}) \in \mathbb{R}^{n^d}$ is a tensor of real coefficients such that $a_{i_1 \ldots i_d} = 0$ whenever $i_j = i_{j'}$ for some $j \ne j'$ (we also say that $A$ has \emph{generalized diagonal} $0$). These polynomials are linear in every component, hence they are also referred to as multilinear (or tetrahedral) polynomials.

For multilinear polynomials in heavy-tailed random variables, we may show the following elementary $L^p$ (and concentration) bound, which shares some similarities with \cite[Theorem 1.5]{goetze-sambale-sinulis:2021} (whose proof it partially mimics). Here and all over the rest of this section, absolute constants with values in $(0,\infty)$ are typically denoted by $C$, constants which depend on some quantities $Q_1,\ldots,Q_m$ only are denoted by $C_{Q_1,\ldots,Q_m}$, and the precise values of the constants may vary from line to line (especially in the course of the proofs).

\begin{proposition}\label{TailsMultPol}
Let $X = (X_1, \ldots, X_n)$ be a random vector with independent centered (i.\,e., $\EW[X_i]=0$) components such that there exist $\alpha > 2$ and $b > 0$ with $\mathbb{P}(|X_i| \ge t) \le (b/t)^\alpha$ for any $t \ge b$ and any $i = 1, \ldots, n$. Let $A = (a_{i_1 \ldots i_d})$ be a tensor with generalized diagonal $0$. Then,
\[
\lVert f_{d,A} \lVert_{L^p} \le C_{d,\alpha} \lVert A \rVert_\mathrm{HS} b^d \Big(\frac{\alpha}{\alpha - p}\Big)^{d/p}
\]
for any $p \in [2,\alpha)$. In particular,
\[
\mathbb{P}(|f_{d,A}(X)| \ge t) \le C_{d,\alpha} \log^d\Big(\frac{t}{\lVert A \rVert_\mathrm{HS}b^d}\Big) \Big(\frac{\lVert A \rVert_\mathrm{HS}b^d}{t}\Big)^\alpha
\]
for all $t \ge C_{d,\alpha} \lVert A \rVert_\mathrm{HS} b^d$.
\end{proposition}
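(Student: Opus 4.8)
The plan is to bound $\lVert f_{d,A}\rVert_{L^p}$ by an inductive/recursive decomposition over the $d$ indices, peeling off one variable at a time, and then feed the resulting moment bound into the generalized Chebyshev estimate \eqref{eq:log-error-tails-gen}. First I would record the scalar input: since $\mathbb{P}(|X_i| \ge t) \le (b/t)^\alpha$ for $t \ge b$, integrating tails gives $\EW[|X_i|^q] \le b^q \alpha/(\alpha-q)$ for $q \in (0,\alpha)$, and for $q \in [2,\alpha)$ one has the cleaner form $\lVert X_i\rVert_{L^q} \le C_\alpha b (\alpha/(\alpha-q))^{1/q}$; moreover $\lVert X_i\rVert_{L^2} \le C_\alpha b$ is a fixed constant since $\alpha > 2$. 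The centering assumption $\EW[X_i]=0$ together with the generalized-diagonal-zero condition on $A$ is what makes the chaos a genuine martingale-type object, so that symmetrization and hypercontraction-free recursive estimates are available.

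The core step is the recursion. Fix $p \in [2,\alpha)$ and write $f_{d,A}(X) = \sum_{i_d} \big(\sum_{i_1,\dots,i_{d-1}} a_{i_1\ldots i_d} X_{i_1}\cdots X_{i_{d-1}}\big) X_{i_d} =: \sum_{i_d} g_{i_d}(X') X_{i_d}$, where $X' = (X_1,\dots,X_n)$ with the $i_d$-slot removed in spirit (the generalized diagonal zero guarantees $g_{i_d}$ does not involve $X_{i_d}$). Conditioning on $X'$, this is a sum of independent centered terms in $X_{i_d}$, so by a Rosenthal-type / Marcinkiewicz–Zygmund inequality (valid for $p \ge 2$, with a constant $C_{p}$ that we must control by $C_{d,\alpha}$ using $p < \alpha$),
\[
\lVert f_{d,A}\rVert_{L^p} \le C_\alpha \Big\lVert \Big(\sum_{i_d} g_{i_d}(X')^2 \EW[X_{i_d}^2]\Big)^{1/2}\Big\rVert_{L^p} + C_p \Big(\sum_{i_d} \EW\big[|g_{i_d}(X')|^p\big]\,\EW\big[|X_{i_d}|^p\big]\Big)^{1/p}.
\]
The first (``Gaussian-type'') term, using $\EW[X_{i_d}^2] \le C_\alpha b^2$, reduces by the triangle inequality in $L^{p/2}$ to $C_\alpha b\,\lVert (\sum_{i_d} g_{i_d}^2)^{1/2}\rVert_{L^p}$, and $(\sum_{i_d} g_{i_d}^2)^{1/2}$ is again a chaos of order $d-1$ (in the coefficient tensor obtained by squaring and summing out $i_d$), with Hilbert–Schmidt norm exactly $\lVert A\rVert_{\mathrm{HS}}$; by induction this contributes $C_{d-1,\alpha}\lVert A\rVert_{\mathrm{HS}} b^d (\alpha/(\alpha-p))^{(d-1)/p}$. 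The second (``heavy-tailed'') term is where the extra power of $\alpha/(\alpha-p)$ enters: bound $\EW[|X_{i_d}|^p] \le b^p\alpha/(\alpha-p)$ and $\EW[|g_{i_d}|^p] \le \lVert g_{i_d}\rVert_{L^p}^p$, apply the induction hypothesis to each $g_{i_d}$ (these are order-$(d-1)$ chaoses with HS norms $\lVert A_{\cdot\,i_d}\rVert_{\mathrm{HS}}$ whose squares sum to $\lVert A\rVert_{\mathrm{HS}}^2$), and collect; this yields $C_{d-1,\alpha}\lVert A\rVert_{\mathrm{HS}} b^d (\alpha/(\alpha-p))^{d/p}$. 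Taking the $d=1$ case (a linear form, handled directly by the above Rosenthal inequality and the scalar moment bound) as the base, the recursion closes and gives $\lVert f_{d,A}\rVert_{L^p} \le C_{d,\alpha}\lVert A\rVert_{\mathrm{HS}} b^d (\alpha/(\alpha-p))^{d/p}$ for all $p \in [2,\alpha)$.

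Finally, the concentration bound follows immediately: the moment bound is exactly the hypothesis of \eqref{eq:log-error-tails-gen} with $Z = f_{d,A}(X)$, $\gamma = C_{d,\alpha}\lVert A\rVert_{\mathrm{HS}} b^d$ and $\beta = d$, so for $t > \gamma\, \eul^{d/\alpha}$ (which we absorb into the stated threshold $t \ge C_{d,\alpha}\lVert A\rVert_{\mathrm{HS}} b^d$ by enlarging $C_{d,\alpha}$) we get $\mathbb{P}(|f_{d,A}(X)| \ge t) \le (\alpha/d)^d \eul^d \log(t/\gamma)^d (\gamma/t)^\alpha$, which is the asserted inequality after renaming constants and using $\log(t/\gamma) \le \log(t/(\lVert A\rVert_{\mathrm{HS}}b^d))$ up to a $C_{d,\alpha}$ factor. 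The main obstacle is purely bookkeeping: keeping every Rosenthal constant $C_p$ uniformly bounded on $p \in [2,\alpha)$ — this is where $\alpha > 2$ is essential, since $C_p$ can be taken of order $p$ (or $p/\log p$), hence $\le C_\alpha$ for $p < \alpha$ — and making sure the generalized-diagonal-zero condition is preserved when one forms the ``squared-and-summed'' coefficient tensors in the induction, so that the conditional independence used in each application of Rosenthal's inequality is legitimate.
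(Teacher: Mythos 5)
There is a genuine gap at the core step. You write $f_{d,A}(X)=\sum_{i_d} g_{i_d}X_{i_d}$ and claim that ``conditioning on $X'$'' turns this into a sum of independent centered terms, to which a Rosenthal/Marcinkiewicz--Zygmund inequality applies. But there is no single conditioning that achieves this for the \emph{undecoupled} chaos: each $g_{i_d}$ is a function of all coordinates except $X_{i_d}$, so the terms $g_{i_d}X_{i_d}$, $i_d=1,\ldots,n$, share variables (already for $d=2$, $g_1$ contains $X_2$ and $g_2$ contains $X_1$), and the generalized-diagonal-zero condition alone does not create the conditional independence you invoke. This is precisely why the paper's proof begins by decoupling (\cite[Theorem 3.1.1]{delaPena-Gine:99}), replacing $X_{i_1}\cdots X_{i_d}$ by $X_{i_1}^{(1)}\cdots X_{i_d}^{(d)}$ with independent copies; only then is the last slot, conditionally on $X^{(1)},\ldots,X^{(d-1)}$, a genuine sum of independent terms. (The paper additionally symmetrizes and uses Kwapie\'n's contraction principle to pass to symmetrized Pareto variables and then applies the Hitczenko--Montgomery-Smith--Oleszkiewicz moment inequality as the $d=1$ base case; your conditional Rosenthal bound, with $C_p\le C_\alpha$ since $p<\alpha$, could replace that base case and would make the argument somewhat more elementary --- but only after the decoupling step, which is indispensable and missing from your proposal. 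Alternatively one would have to set up a martingale-difference/Burkholder--Rosenthal argument, which you do not do.)

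A secondary inaccuracy: $(\sum_{i_d} g_{i_d}^2)^{1/2}$ is \emph{not} ``again a chaos of order $d-1$'' --- it is the Euclidean norm of a vector of order-$(d-1)$ chaoses, so the induction hypothesis cannot be applied to it directly. The intended estimate is still recoverable, and this is exactly how the paper's induction runs: bound $\lVert(\sum_{i_d} g_{i_d}^2)^{1/2}\rVert_{L^p}^2=\lVert\sum_{i_d} g_{i_d}^2\rVert_{L^{p/2}}\le\sum_{i_d}\lVert g_{i_d}\rVert_{L^p}^2$ by the triangle inequality in $L^{p/2}$ and apply the induction hypothesis slice-wise, using $\sum_{i_d}\lVert A_{\cdot i_d}\rVert_{\ell^2}^2=\lVert A\rVert_{\mathrm{HS}}^2$; your treatment of the second (heavy-tailed) Rosenthal term, including $(\sum_{i_d}\lVert A_{\cdot i_d}\rVert_{\mathrm{HS}}^p)^{1/p}\le\lVert A\rVert_{\mathrm{HS}}$ for $p\ge2$, and the final passage to tails via \eqref{eq:log-error-tails-gen} (absorbing the restriction $p\ge2$ into the threshold $t\ge C_{d,\alpha}\lVert A\rVert_{\mathrm{HS}}b^d$) are fine. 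So the skeleton of your recursion matches the paper's, but as written the proof does not go through without adding the decoupling step and repairing the treatment of the square-function term.
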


Note that this result can also be derived from \cite[Theorem 1]{kolesko-latala:2015}. However, we present an own proof especially adapted to the random variables under consideration in this note (and thus easier).

\begin{proof}
In view of \eqref{eq:log-error-tails-gen}, it suffices to prove the $L^p$ bound, and moreover, we may clearly assume $b=1$. Let $X^{(1)}, \ldots, X^{(d)}$ be independent copies of the vector $X$ and $(\varepsilon_i^{(j)})$, $i \le n$, $j \le d$, a set of i.i.d.\ Rademacher variables which are independent of the $(X^{(j)})_j$. First, by standard decoupling (cf.\ \cite[Theorem 3.1.1]{delaPena-Gine:99}) and symmetrization inequalities (\cite[Lemma 1.2.6]{delaPena-Gine:99} applied iteratively), we have
\begin{align*}
\Big\lVert \sum_{i_1, \ldots, i_d} a_{i_1 \ldots i_d} X_{i_1} \cdots X_{i_d} \Big\lVert_{L^p} &\le C_d \Big\lVert \sum_{i_1, \ldots, i_d} a_{i_1 \ldots i_d} X_{i_1}^{(1)} \cdots X_{i_d}^{(d)} \Big\lVert_{L^p}\\
&\le C_d \Big\lVert \sum_{i_1, \ldots, i_d} a_{i_1 \ldots i_d} \varepsilon_{i_1}^{(1)} X_{i_1}^{(1)} \cdots \varepsilon_{i_d}^{(d)}X_{i_d}^{(d)} \Big\lVert_{L^p}.
\end{align*}
By Kwapie\'{n}'s contraction principle \cite[Theorem 1]{kwapien:87} (more precisely, its decoupled analogue, which follows easily by iteration of Kwapie\'{n}'s result for linear forms conditionally on the other random variables), we furthermore obtain that
\[
\Big\lVert \sum_{i_1, \ldots, i_d} a_{i_1 \ldots i_d} \varepsilon_{i_1}^{(1)} X_{i_1}^{(1)} \cdots \varepsilon_{i_d}^{(d)}X_{i_d}^{(d)} \Big\lVert_{L^p} \le \Big\lVert \sum_{i_1, \ldots, i_d} a_{i_1 \ldots i_d} Y_{i_1}^{(1)} \cdots Y_{i_d}^{(d)} \Big\lVert_{L^p}
\]
for any $p \ge 1$, where $Y_1, \ldots, Y_n$ is a set of i.i.d.\ random variables with symmetrized Pareto distribution $X_i \sim \mathrm{Par}_\mathrm{s}(\alpha,1)$.

Let us now first assume $d=1$. By \cite[Theorem 1.1]{hitczenko-montgomery-smith-oleszkiewicz:97} applied to the symmetric log-convex random variables $a_1Y_1, \ldots, a_nY_n$, we obtain
\begin{align}
\Big\lVert \sum_{i=1}^n a_i Y_i \Big\lVert_{L^p} &\le C \Big(\Big(\sum_{i=1}^n |a_i|^p\lVert Y_i \lVert_{L^p}^p\Big)^{1/p} + \sqrt{p} \Big(\sum_{i=1}^n a_i^2\lVert Y_i \rVert_{L^2}^2\Big)^{1/2}\Big)\notag\\
&= C \Big(\Big(\sum_{i=1}^n |a_i|^p\Big)^{1/p} \Big(\frac{\alpha}{\alpha - p}\Big)^{1/p} + \sqrt{p} \Big(\sum_{i=1}^n a_i^2\Big)^{1/2} \Big(\frac{\alpha}{\alpha - 2}\Big)^{1/2}\Big)\label{intermediate}\\
&\le C_\alpha \Big(\sum_{i=1}^n a_i^2\Big)^{1/2} \Big(\frac{\alpha}{\alpha - p}\Big)^{1/p} = C_\alpha \lVert A \rVert_\mathrm{HS} \Big(\frac{\alpha}{\alpha - p}\Big)^{1/p}\notag
\end{align}
for any $p \in [2,\alpha)$. This may be iterated to arrive at
\begin{equation}\label{induction}
\Big\lVert \sum_{i_1, \ldots, i_d} a_{i_1 \ldots i_d} Y_{i_1}^{(1)} \cdots Y_{i_d}^{(d)} \Big\lVert_{L^p} \le C_\alpha^d \lVert A \rVert_\mathrm{HS} \Big(\frac{\alpha}{\alpha - p}\Big)^{d/p}
\end{equation}
To see this, assume that \eqref{induction} holds up to order $d-1$. It follows that
\begin{align*}
    &\qquad \Big\|\sum_{i_1, \ldots, i_d} a_{i_1 \ldots i_d}Y_{i_1}^{(1)}\cdots Y_{i_d}^{(d)}\Big\|_{L^p}^2\\
    &\le C_\alpha^{2(d-1)}\Big(\frac{\alpha}{\alpha - p}\Big)^{2(d-1)/p} \Big\|(\sum_{i_1, \ldots, i_{d-1}}(\sum_{i_d=1}^na_{i_1 \ldots i_d}Y_{i_d}^{(d)})^2)^{1/2}\Big\|_{L^p}^2\\
    &= C_\alpha^{2(d-1)}\Big(\frac{\alpha}{\alpha - p}\Big)^{2(d-1)/p} \Big\|\sum_{i_1, \ldots, i_{d-1}}(\sum_{i_d=1}^na_{i_1 \ldots i_d}Y_{i_d}^{(d)})^2\Big\|_{L^{p/2}}\allowdisplaybreaks\\
    &\le C_\alpha^{2(d-1)}\Big(\frac{\alpha}{\alpha - p}\Big)^{2(d-1)/p} \sum_{i_1, \ldots, i_{d-1}}\Big\|(\sum_{i_d=1}^na_{i_1 \ldots i_d}Y_{i_d}^{(d)})^2\Big\|_{L^{p/2}}\\
    &= C_\alpha^{2(d-1)}\Big(\frac{\alpha}{\alpha - p}\Big)^{2(d-1)/p} \sum_{i_1, \ldots, i_{d-1}}\Big\|\sum_{i_d=1}^na_{i_1 \ldots i_d}Y_{i_d}^{(d)}\Big\|_{L^p}^2\\
    &\le C_\alpha^{2d}\Big(\frac{\alpha}{\alpha - p}\Big)^{2d/p} \sum_{i_1, \ldots, i_{d-1}}\lVert a_{i_1 \ldots i_{d-1}\cdot}\rVert_{\ell^2}^2 = C_\alpha^{2d}\Big(\frac{\alpha}{\alpha - p}\Big)^{2d/p}\lVert A \rVert_\mathrm{HS}^2,
\end{align*}
using the induction in the first and the case of $d=1$ in the last inequality and denoting by $a_{i_1 \ldots i_{d-1}\cdot}$ the vector in $\mathbb{R}^n$ defined by fixing all the coordinates of the tensor $A$ but the last one (with Euclidean norm $\lVert \cdot \rVert_{\ell^2}$).
\end{proof}

Note that the $L^p$ estimates from \cite{hitczenko-montgomery-smith-oleszkiewicz:97} we used in the proof hold for $p \ge 2$ only, so that we must assume $\alpha > 2$ in Proposition \ref{TailsMultPol}. Similarly, one may think of replacing the assumption $\mathbb{P}(|X_i| \ge t) \le (b/t)^\alpha$ by a condition on the $L^p$ norms. However, in the proof we need stochastic dominance by a log-concave distribution. In particular, it is possible to reformulate Proposition \ref{TailsMultPol} with the Pareto-type tails replaced by the tails of the random variables introduced in Proposition \ref{prop-logcon}.

Let us compare the concentration bounds from Proposition \ref{TailsMultPol} to the behaviour of polynomial chaos in sub-Gaussian (and subexponential) random variables. In the latter cases, the tails get significantly heavier as $d$ increases. For instance, the tails of a $d$-th order chaos in sub-Gaussian random variables will be $2/d$-subexponential (i.\,e.\ of type $\exp(-t^{2/d})$) for large $t$. By contrast, the $d$-dependence in Proposition \ref{TailsMultPol} mostly appears in the logarithmic factor, while the (asymptotically dominating) Pareto-type tail decay still comes with shape parameter $\alpha$ irrespective of $d$.

Next, we show a version of Proposition \ref{TailsMultPol} which also includes diagonal terms. Note that if $X \sim \mathrm{Par}(\alpha,b)$ and $k \in \mathbb{N}$, then $X^k \sim \mathrm{Par}(\alpha/k,b^k)$. As a consequence, unlike in the sub-Gaussian or sub-exponential case again, the tail behaviour depends on the order of the largest power to which a single random variable $X_i$ is raised. More precisely, we may write $A = A_1 + \ldots + A_d$ with $d$-tensors $A_1, \ldots, A_d$ such that
\[
(A_k)_{i_1\ldots i_d} = \begin{cases}
a_{i_1\ldots i_d} & \text{if } k= \max\{\nu \colon \exists \, i_{j_1} = \ldots = i_{j_\nu}, j_1,\ldots, j_\nu \text{ pw.\ different}\}\\
0 & \text{else.}
\end{cases}
\]
For instance, if $A$ has generalized diagonal $0$ as in Proposition \ref{TailsMultPol}, then $A=A_1$.

Furthermore, if diagonal terms appear in a $d$-th order chaos, it is necessary to ``recenter'' it. This is already obvious for $d=2$, where the functional under consideration may be rewritten as
\[
\sum_{i \ne j} a_{ij} (X_i - \mathbb{E}[X_i])(X_j-\mathbb{E}[X_j]) + \sum_{i=1}^n a_{ii} (X_i^2 - \mathbb{E}[X_i^2]).
\]
Similarly, in higher orders, we need to replace each of the monomials $X_{i_1}^{k_1} \cdots X_{i_m}^{k_m}$ by $(X_{i_1}^{k_1} - \mathbb{E}[X_{i_1}^{k_1}]) \cdots (X_{i_m}^{k_m} - \mathbb{E}[X_{i_m}^{k_m}])$. As a consequence, given any $d$-tensor $A \in \mathbb{R}^{n^d}$, the natural generalization of \eqref{multPol} are functionals of the form
\begin{equation}\label{MultPolgen}
f_{d,A}(X) := \sum_{\nu=1}^d \sum_{\substack{k_1 \ge \ldots \ge k_\nu\ge 1\\k_1 + \ldots + k_\nu=d}} \sum_{i_1 \ne \ldots \ne i_\nu} \tilde{a}_{i_1\ldots i_\nu}^{k_1 \ldots k_\nu} (X_{i_1}^{k_1} - \mathbb{E}[X_{i_1}^{k_1}]) \cdots (X_{i_m}^{k_m} - \mathbb{E}[X_{i_m}^{k_m}]).
\end{equation}
Here, in terms of the tensor $A$, we have
\[
\tilde{a}_{i_1 \ldots i_\nu}^{k_1 \ldots k_\nu} = \sum a_{j_1\ldots j_d},
\]
where the sum extends over all $d$-tuples $j_1, \ldots, j_d$ in which every index $i_1, \ldots, i_\nu$ appears exactly $k_1,\ldots,k_\nu$ times. In particular, if $a_{j_1 \ldots j_d} \ne 0$, then it is a non-zero entry of $A_{k_1}$ (recall that $k_1 \ge \ldots \ge k_\nu$). In this sense, \eqref{MultPolgen} centers and ``regroups'' the indexes according to how many \emph{different} indexes appear in $i_1,\ldots,i_d$ and to which powers they are raised.

\begin{proposition}\label{TailsMultPolwdiag}
Let $X = (X_1, \ldots, X_n)$ be a random vector with independent centered components such that there exist $\alpha > 2$ and $b > 0$ with $\mathbb{P}(|X_i| \ge t) \le (b/t)^\alpha$ for any $t \ge b$ and any $i = 1, \ldots, n$. Let $A = (a_{i_1 \ldots i_d}) \in \mathbb{R}^{n^d}$ be a $d$-tensor and $f_{d,A}$ the functional defined in \eqref{MultPolgen}. Then,
\begin{align}
\lVert f_{d,A}(X) \lVert_{L^p} &\le C_{d,\alpha} \sum_{k=1}^{k^\ast} \lVert A_k \rVert_\mathrm{HS} b^d \Big(\frac{\alpha/k}{\alpha/k - p}\Big)^{(d-k+1)/p}\label{summom}\\
&\le C_{d,\alpha} \lVert A \rVert_\mathrm{HS} b^d \Big(\frac{\alpha/k^\ast}{\alpha/k^\ast - p}\Big)^{(d-k^\ast+1)/p}\notag
\end{align}
for any $p \in [2,\alpha/k^\ast)$, where $k^\ast := \max\{k \colon A_k \ne 0\}$. In particular,
\[
\mathbb{P}(|f_{d,A}(X)| \ge t) \le C_{d,\alpha} \max_{k=1,\ldots,k^\ast} \Big( \log^{d-k+1}\Big(\frac{t}{\lVert A_k \rVert_\mathrm{HS}b^d}\Big) \Big(\frac{\lVert A_k \rVert_\mathrm{HS}b^d}{t}\Big)^{\alpha/k}\Big)
\]
for all $t \ge C_{d,\alpha} b^d \max_{k=1,\ldots,k^\ast}\lVert A_k \rVert_\mathrm{HS}$.
\end{proposition}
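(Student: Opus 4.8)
The plan is to reduce the claim to the $d=1$ case of Proposition~\ref{TailsMultPol}, applied separately to each ``homogeneous block'' of the functional \eqref{MultPolgen}. Writing $Z_i^{(k)} := X_i^k - \EW[X_i^k]$, split $f_{d,A} = \sum_\lambda g_\lambda$, where $\lambda = (k_1 \ge \cdots \ge k_\nu)$ runs over the $C_d$-many partitions of $d$ and
\[
g_\lambda := \sum_{i_1 \ne \cdots \ne i_\nu} \tilde a_{i_1 \ldots i_\nu}^{k_1 \ldots k_\nu}\, Z_{i_1}^{(k_1)} \cdots Z_{i_\nu}^{(k_\nu)},
\]
so that by the triangle inequality it suffices to bound each $\lVert g_\lambda\rVert_{L^p}$. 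Each $g_\lambda$ is a multilinear form, off the generalized diagonal, in the ``variables'' $Z_{i_j}^{(k_j)}$, hence of exactly the type treated in Proposition~\ref{TailsMultPol} --- except that slot $j$ carries the power $k_j$ instead of $1$. The two elementary inputs needed to run that proof in this setting are: (i) from $\PP(|X_i| \ge t) \le (b/t)^\alpha$ one gets $\lVert X_i^k\rVert_{L^q} = \lVert X_i\rVert_{L^{kq}} \le b^k (\frac{\alpha/k}{\alpha/k - q})^{1/q}$ for $q < \alpha/k$, whence also $\lVert Z_i^{(k)}\rVert_{L^q} \le 2 b^k (\frac{\alpha/k}{\alpha/k - q})^{1/q}$ (using $|\EW X_i^k| \le \lVert X_i^k\rVert_{L^q}$ for $q \ge 1$); and (ii) $|Z_i^{(k)}|$ is stochastically dominated by $\mathrm{Par}_{\mathrm{s}}(\alpha/k, c_{k,\alpha}b^k)$ with $c_{k,\alpha} := 2\alpha/(\alpha - k)$, absorbing $|\EW X_i^k| \le b^k\alpha/(\alpha-k)$ into the scale. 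All of this is legitimate since $k \le k^\ast < \alpha/2 < \alpha$ for every relevant $k$ (the interval $p \in [2,\alpha/k^\ast)$ being empty otherwise).

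For a fixed $\lambda$, I would then transcribe the proof of Proposition~\ref{TailsMultPol}: decouple $g_\lambda$ via \cite[Theorem 3.1.1]{delaPena-Gine:99} so that slot $j$ uses an independent copy $X^{(j)}$ of $X$; symmetrize to insert Rademacher signs $\varepsilon_i^{(j)}$ into each slot; apply the decoupled Kwapie\'n contraction principle \cite[Theorem 1]{kwapien:87} to replace $\varepsilon_i^{(j)} Z_i^{(k_j),(j)}$ by $Y_i^{(k_j)} \sim \mathrm{Par}_{\mathrm{s}}(\alpha/k_j, c_{k_j,\alpha} b^{k_j})$, which is symmetric with log-convex tails by (ii); and finally combine the Hitczenko--Montgomery-Smith--Oleszkiewicz bound \cite[Theorem 1.1]{hitczenko-montgomery-smith-oleszkiewicz:97} with the same ``square-inside-$L^{p/2}$'' iteration as in \eqref{induction}, peeling off one slot at a time. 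Peeling slot $j$ contributes a factor $C_{k_j,\alpha} b^{k_j}(\frac{\alpha/k_j}{\alpha/k_j - p})^{1/p}$ (for $p < \alpha/k_j$), while the coefficient tensor contracts down to $\lVert \tilde A^\lambda\rVert_{\mathrm{HS}}$, where $\tilde A^\lambda = (\tilde a_{i_1 \ldots i_\nu}^{k_1 \ldots k_\nu})$. Since $k_1 = \max_j k_j$, the binding constraint is $p < \alpha/k_1$, and for each slot with $k_j < k_1$ the factor $(\frac{\alpha/k_j}{\alpha/k_j - p})^{1/p}$ stays bounded by $(\frac{k_1}{k_1-k_j})^{1/2} \le d^{1/2}$ on $[2,\alpha/k_1)$; hence only the $m := \#\{j : k_j = k_1\}$ maximal slots create a singular factor, and $b^{k_1 + \cdots + k_\nu} = b^d$. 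This produces
\[
\lVert g_\lambda\rVert_{L^p} \le C_{d,\alpha}\, \lVert \tilde A^\lambda\rVert_{\mathrm{HS}}\, b^d \Big(\frac{\alpha/k_1}{\alpha/k_1 - p}\Big)^{m/p}, \qquad p \in [2,\alpha/k_1).
\]

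Two bookkeeping steps then finish the argument. First, $m \le d/k_1 \le d - k_1 + 1$ (the second inequality rearranging to $(k_1-1)(d-k_1)\ge0$), and since $\frac{\alpha/k_1}{\alpha/k_1-p}\ge1$ the exponent $m/p$ may be enlarged to $(d-k_1+1)/p$. Second, Cauchy--Schwarz applied to $\tilde a_{i_1\ldots i_\nu}^{k_1\ldots k_\nu} = \sum a_{j_1\ldots j_d}$ --- a sum of at most $\binom{d}{k_1,\ldots,k_\nu}\le d!$ entries of $A_{k_1}$, with distinct index patterns producing (up to a bounded multiplicity) disjoint sets of $d$-tuples --- gives $\lVert \tilde A^\lambda\rVert_{\mathrm{HS}} \le C_d\lVert A_{k_1}\rVert_{\mathrm{HS}}$. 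Summing over $\lambda$ and grouping by the value $k = k_1$ of the largest part yields the first bound in \eqref{summom}, the range $p \in [2,\alpha/k^\ast)$ being dictated by the term $k = k^\ast := \max\{k : A_k \ne 0\}$; the second bound follows because, for $k < k^\ast$, one has $(\frac{\alpha/k}{\alpha/k-p})^{(d-k+1)/p} \le C_d$ on $[2,\alpha/k^\ast)$ together with $\lVert A_k\rVert_{\mathrm{HS}}\le\lVert A\rVert_{\mathrm{HS}}$ and $\frac{\alpha/k^\ast}{\alpha/k^\ast-p}\ge1$. Finally, writing $f_{d,A} = \sum_{k=1}^{k^\ast}G_k$ with $G_k := \sum_{\lambda:\,k_1=k} g_\lambda$, applying the generalized Chebyshev bound \eqref{eq:log-error-tails-gen} to each $G_k$ (with $\alpha$ replaced by $\alpha/k$, $\beta = d-k+1$ and $\gamma = C_{d,\alpha}\lVert A_k\rVert_{\mathrm{HS}}b^d$) and using $\PP(|f_{d,A}| \ge t) \le \sum_{k=1}^{k^\ast} \PP(|G_k| \ge t/k^\ast)$ yields the stated tail estimate. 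I expect the two delicate points to be the decoupling/symmetrization/Kwapie\'n step for kernels that are \emph{not} symmetric in their arguments (since the powers $k_j$ differ across slots) and the Cauchy--Schwarz comparison $\lVert \tilde A^\lambda\rVert_{\mathrm{HS}} \le C_d\lVert A_{k_1}\rVert_{\mathrm{HS}}$; everything else is a faithful transcription of the proof of Proposition~\ref{TailsMultPol}.
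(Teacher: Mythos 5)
Your proposal is correct and follows essentially the same route as the paper: decompose $f_{d,A}$ according to the partitions in \eqref{MultPolgen}, reduce each block to a Proposition~\ref{TailsMultPol}-type bound for the centered powers (which have Pareto-type tails with exponent $\alpha/k_j$), use $\lVert \tilde A\rVert_{\mathrm{HS}} \le C_d \lVert A_{k_1}\rVert_{\mathrm{HS}}$, and finish with a union bound combined with \eqref{eq:log-error-tails-gen}. The only difference is cosmetic: the paper invokes Proposition~\ref{TailsMultPol} as a black box with order $\nu$ and the uniform worst-case exponent $\alpha/k_1$ (then uses $\nu \le d-k_1+1$), whereas you re-run its proof with slot-dependent exponents to get the exponent $m/p$ before relaxing it to $(d-k_1+1)/p$ --- a slightly sharper but equivalent bookkeeping of the same argument.
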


Note that Proposition \ref{TailsMultPolwdiag} implicitly requires that $\alpha/k^\ast > 2$. To illustrate the result, let us consider the case of $d=2$. Here, $A = A_1 + A_2 = A^\mathrm{od} + A^\mathrm{d}$ for the off-diagonal and diagonal parts of $A$, so that the tail bound given in Proposition \ref{TailsMultPolwdiag} reads
\begin{align}\label{HWI}
\begin{split}
\mathbb{P}(|f_{2,A}(X)| \ge t) \le C_\alpha \max\Big(& \log^2\Big(\frac{t}{\lVert A^\mathrm{od} \rVert_\mathrm{HS}b^2}\Big) \Big(\frac{\lVert A^\mathrm{od} \rVert_\mathrm{HS}b^2}{t}\Big)^\alpha,\\ &\log\Big(\frac{t}{\lVert A^\mathrm{d} \rVert_\mathrm{HS}b^2}\Big)\Big(\frac{\lVert A^\mathrm{d} \rVert_\mathrm{HS}b^2}{t}\Big)^{\alpha/2}\Big).
\end{split}
\end{align}
While \eqref{HWI} is certainly not strong enough to be called a true analogue of the famous Hanson--Wright inequality for independent sub-Gaussian random variables, it can nevertheless be regarded as a preliminary result in this direction. We provide a more detailed comparison in the following subsection.

\begin{proof}
We again assume $b=1$. Note that $\mathbb{P}(|X_{i_j}^{k_j}| \ge t) \le t^{-\alpha/k_j}$ and therefore also $\mathbb{P}(|X_{i_j}^{k_j} - \mathbb{E}[X_{i_j}^{k_j}]| \ge t) \le C_{\alpha,k_j}t^{-\alpha/k_j}$ for $t$ sufficiently large (depending on $\alpha$ and $k_j$). Therefore, we may apply Proposition \ref{TailsMultPol} with $\nu$ instead of $d$ and $\alpha/k_1$ instead of $\alpha$ to each of the summands
\[
f^{k_1,\ldots,k_\nu}_{\nu,\tilde{A}}(X) := \sum_{i_1 \ne \ldots \ne i_\nu} \tilde{a}_{i_1 \ldots i_\nu}^{k_1 \ldots k_\nu} (X_{i_1}^{k_1} - \EW[X_{i_1}^{k_1}]) \cdots (X_{i_\nu}^{k_\nu}- \EW[X_{i_\nu}^{k_\nu}])
\]
from the representation \eqref{MultPolgen}, where we suppress the dependency of $\tilde{A}$ on $k_1,\ldots,k_\nu$ in our notation. Clearly, $\lVert \tilde{A} \rVert_\mathrm{HS} \le C_d \lVert A_{k_1} \rVert_\mathrm{HS}$ (recall that $k_1 = \max_jk_j$), and hence,
 \begin{align*}
\lVert f^{k_1,\ldots,k_\nu}_{\nu,\tilde{A}}(X) \lVert_{L^p} &\le C_{d,\alpha} \lVert \tilde{A} \rVert_\mathrm{HS} \Big(\frac{\alpha/k_1}{\alpha/k_1 - p}\Big)^{\nu/p}\\ &\le C_{d,\alpha} \lVert A_{k_1} \rVert_\mathrm{HS} \Big(\frac{\alpha/k_1}{\alpha/k_1 - p}\Big)^{(d-k_1+1)/p}.
 \end{align*}
Summing up according to $k_1 = k \in \{1,\ldots,k^\ast\}$ we arrive at \eqref{summom}. To derive the tail bound, use that $f_{d,A} = \sum_{k=1}^{k^\ast} f_{k,A_k}$, so that by union bound,
\[
\mathbb{P}(|f_{d,A}(X)| \ge t) \le \sum_{k=1}^{k^\ast} \mathbb{P}(|f_{k,A_k}(X)| \ge t/k^\ast).
\]
As in Proposition \ref{TailsMultPol}, the claim now follows from the $L^p$ estimates we derived above (for each $k$) in combination with \eqref{eq:log-error-tails-gen}.
\end{proof}

\subsection{Discussion and open questions}

For general $d \ge 2$, the results from Propositions \ref{TailsMultPol} and \ref{TailsMultPolwdiag} seem new. However, alternate bounds for linear forms, i.\,e., the case of $d=1$, can be deduced from existing literature. In particular, we shall consider the Fuk--Nagaev inequality which is due to \cite{fuk-nagaev:1971}, cf.\ also \cite{fuk:1973} and \cite{nagaev:1979} (for a more recent partial refinement, see \cite{chesneau:09}). Given a sequence $X_1,\ldots,X_n$ of independent random variables with $\mathbb{E}[X_i] = 0$ and finite second moments, let $S_n := X_1 + \ldots + X_n$, $\sigma^2 := \mathbb{E}[X_1^2] + \ldots + \mathbb{E}[X_n^2]$, and
\[
C_p(X) := \Big(\sum_{i=1}^n \mathbb{E}[\max(0,X_i)^p]\Big)^{1/p}
\]
for any $p \ge 1$. Then, by \cite[Corollary 1.8]{nagaev:1979},
\[
\mathbb{P}(S_n > t) \le \Big(\frac{(p+2)C_p(X)}{pt}\Big)^p + \exp\Big(-\frac{2t^2}{(p+2)^2\eul^p\sigma^2}\Big)
\]
for any $t > 0$.

Now assume that $X_1', \ldots, X_n'$ have $L^p$ norms at most of Pareto type \eqref{Paretomoments}, say, with $b=1$, and let $X_1 := a_1X_1', \ldots, X_n := \, a_nX_n'$ for real-valued coefficients $a_1, \ldots, a_n$. Writing $a=(a_1,\ldots,a_n)$ and $\lVert a \rVert_{\ell^p}$ for its $\ell^p$ norm, we clearly have
\[
C_p(X)^p \le \lVert a \rVert_{\ell^p}^p \frac{\alpha}{\alpha-p},\qquad \sigma^2 \le \lVert a \rVert_{\ell^2}^2 \frac{\alpha}{\alpha-2}.
\]
It follows that in this case,
\[
\mathbb{P}(S_n > t) \le \frac{(p+2)^p}{p^p} \frac{\alpha}{\alpha-p} \frac{\lVert a \rVert_{\ell^p}^p}{t^p} + \exp\Big(-\frac{2(\alpha-2)t^2}{(p+2)^2\eul^p\lVert a \rVert_{\ell^2}^2\alpha}\Big)
\]
for any $p \in (2,\alpha)$ and any $t > 0$. One may now plug in $p=\alpha - 1/\log(t/\lVert a \rVert_{\ell^\alpha})$ to arrive at a bound which scales with $1/t^\alpha$ as in \eqref{eq:log-error-tails}, though in this case we additionally have to take care of the $\ell^p$-norms $\lVert a \rVert_{\ell^p}$ (e.\,g.\ by upper-bounding them). We therefore refrain from presenting the resulting inequality. The same holds for the lower tails $\PP(S_n \le - t)$. The resulting (two-sided) inequality is obviously sharper than the bound given by Proposition \ref{TailsMultPol} for $d=1$ and $b=1$, which reads
\[
\mathbb{P}\Big(\Big|\sum_{i=1}^n a_iX_i\Big| \ge t\Big) \le C_\alpha \log(t/\lVert a \rVert_{\ell^2}) \Big(\frac{\lVert a \rVert_{\ell^2}}{t}\Big)^\alpha
\]
for all $t \ge C_\alpha \lVert a \rVert_{\ell^2}$.

In fact, this result corresponds to what we heuristically get from \eqref{intermediate} if we naively optimize each of the two summands of \eqref{intermediate} on its own, recalling \eqref{Paretomoments} and standard results about sub-Gaussian tail behavior like \cite[Proposition 2.5.2]{vershynin:2018} (in particularly pretending we may consider the second summand for any $p \in [2,\infty)$). However, this is an informal argument, and in practice we can only address the ``sub-Gaussian'' part for $p \in [2,\alpha)$, where it is almost meaningless (one needs to access $p \to \infty$ to obtain sub-Gaussian tail bounds). Yet, on a heuristical level the $L^p$ bounds do seem to imply the Fuk--Nagaev results, and it is an interesting question whether this observation can be made rigorous.

Similar remarks hold for $d\ge2$. Here, the aforementioned results for Gaussian chaos or polynomials in sub-Gaussian or sub-exponential random variables identify a wealth of different tail levels which scale with different tensor-type norms of the coefficient tensor $A$ (or more generally of the derivatives of the polynomial $f$ under consideration). By contrast, Propositions \ref{TailsMultPol} and \ref{TailsMultPolwdiag} only involve much fewer levels of tail decay and always the Hilbert--Schmidt norm, which is the largest norm among the family of norms which appears in those results.

For instance, in the case of $d=2$ the by now classical Hanson--Wright inequality (cf.\ e.\,g.\ \cite{hanson-wright:1971,wright:1973,rudelson-vershynin:2013}) states that if the $X_1, \ldots, X_n$ are independent sub-Gaussian random variables with mean $0$ and variance $1$,
\[
\mathbb{P}(|f_{2,A}(X) - \mathbb{E}[f_{2,A}(X)]| \ge t) \le 2 \exp\Big(-\frac{1}{C} \min\Big( \Big(\frac{t}{\lVert A \rVert_\mathrm{HS}}\Big)^2, \frac{t}{\lVert A \rVert_\mathrm{op}}\Big)\Big),
\]
where $\lVert A \rVert_\mathrm{op}$ is the operator norm of $A$ and $C > 0$ is some constant which only depends on the sub-Gaussian norms of the $X_i$. Analogues of the Hanson-Wright inequality for sub-exponential random vectors can be found in \cite{goetze-sambale-sinulis:2021, sambale:2023}. While identifying two regimes of tail decay, the ``heavy-tailed'' Hanson--Wright type inequality \eqref{HWI} does not yet seem to be optimal in the sense of involving different norms (of the matrix $A$) and scaling regimes. One may invoke a sub-Gaussian part by treating the diagonal part with the Fuk--Nagaev result instead of Proposition \ref{TailsMultPol}, which yields the bound
\begin{align}\label{HWI+FNI}
\begin{split}
&\mathbb{P}(|f_{2,A}(X)| \ge t) \le C_\alpha\max\Big(\exp\Big(-C_\alpha'\Big(\frac{t}{\lVert a^{\mathrm{d}} \rVert_{\ell^2}b^2}\Big)^2\Big),\\
&\qquad\frac{\alpha/2}{\alpha/2-p} \frac{b^p\lVert a^{\mathrm{d}} \rVert_{\ell^p}^p}{t^p}, \log^2\Big(\frac{t}{\lVert A^\mathrm{od} \rVert_\mathrm{HS}b^2}\Big)\Big(\frac{\lVert A^\mathrm{od} \rVert_\mathrm{HS}b^2}{t}\Big)^\alpha\Big)
\end{split}
\end{align}
for any $p < \alpha/2$, where $a^\mathrm{d}$ is the vector consisting of the diagonal entries of $A$. However, even if \eqref{HWI+FNI} improves upon \eqref{HWI} it should still be non-optimal.

In principle, an obvious strategy would be to sharpen the proof of Proposition \cite{hitczenko-montgomery-smith-oleszkiewicz:97} by the results derived in \cite{kolesko-latala:2015} (similar to the methods used in \cite{goetze-sambale-sinulis:2021}). However, even if we possibly arrive at $L^p$ bounds of the form
\[
\lVert f_{d,A}(X) \rVert_{L^p} \le \sum_i C(i,\alpha,A) \varphi_{i,\alpha}(p)
\]
for suitable coefficients $C(i,\alpha,A)$ and $p$-dependent functions $\varphi_{i,\alpha}(p)$, it is again hard to derive multilevel tail inequalities from them for similar reasons as sketched in the case of $d=1$. Indeed, in the case of exponential-type tails, $\varphi(p) \approx p^\kappa$ for suitable exponents $\kappa > 0$, while in the heavy-tailed situation $\varphi(p) \approx (\kappa/(\kappa - p))^{1/p}$ for $\kappa \approx \alpha/i$. In particular, these inequalities will only hold for $p \in [p,\alpha/d)$, so that we can only access $p$ in a region where most of the $L^p$ bounds are basically meaningless.

Note in passing that for this reason, the proof of the tail bound in Proposition \ref{TailsMultPolwdiag} involves a union bound (thus splitting the functional into several parts), which is not necessary in the case of sub-exponential random variables. On the other hand, slightly generalizing the case of $d=2$ in Proposition \ref{TailsMultPolwdiag}, observe that if $X$ is a positive random variable such that
    \begin{equation}\label{Tails}
    P(X \ge x) = \max \Big(\frac{b^{2\alpha}}{x^{2\alpha}}, \frac{a^\alpha}{x^\alpha}\Big)
    \end{equation}
    for all $x \ge b$, where $0 < a < b< \infty$ and $\alpha > 0$, we have
    \begin{equation}\label{Moments}
    \mathbb{E}[X^p] \le b^p \frac{2\alpha}{2\alpha-p} + a^p \frac{\alpha}{\alpha-p}.
    \end{equation}
Indeed, an easy calculation yields
\begin{align*}
        \mathbb{E}[X^p]
        &= p \int_0^b x^{p-1}dx + p \int_b^{b^2/a} x^{p-1} \frac{b^{2\alpha}}{x^{2\alpha}} dx + p \int_{b^2/a}^\infty x^{p-1} \frac{a^\alpha}{x^\alpha} dx\\
        &= b^p + \frac{pb^{2\alpha}}{p-2\alpha}\Big(\frac{b^{2p-4\alpha}}{a^{p-2\alpha}} - b^{p-2\alpha}\Big) - \frac{pa^\alpha}{p-\alpha} \frac{b^{2p-2\alpha}}{a^{p-\alpha}}\\
        &= b^p \frac{2\alpha}{2\alpha-p} + a^p \frac{b^{2p-2\alpha}}{a^{2p-2\alpha}} \frac{p}{2\alpha-p} \frac{\alpha}{\alpha-p}
        \le b^p \frac{2\alpha}{2\alpha-p} + a^p \frac{\alpha}{\alpha-p}.
    \end{align*}
    However, it is not clear whether \eqref{Moments} also implies \eqref{Tails} (maybe up to a logarithmic error), which could serve as a starting point for more precise tail bounds for chaos in heavy-tailed random variables.

\end{document}